\newtheorem{lemma}{Lemma}[section]
\newtheorem{theorem}[lemma]{Theorem}
\newtheorem{proposition}[lemma]{Proposition}
\theoremstyle{definition}
\newtheorem{definition}[lemma]{Definition}
\newtheorem{example*}[lemma]{Example}
\newtheorem{remark}[lemma]{Remark}
\theoremstyle{remark}
\newtheorem*{rep@theorem}{\rep@title}
\newcommand{\newreptheorem}[2]{%
\newenvironment{rep#1}[1]{%
 \def\rep@title{{\bf #2 \ref{##1}}}%
 \begin{rep@theorem}}%
 {\end{rep@theorem}}}
\DeclareRobustCommand{\qedify}[1]{%
  \ifmmode \quad\hbox{#1}
  \else
    \leavevmode\unskip\penalty9999 \hbox{}\nobreak\hfill
    \quad\hbox{#1}%
  \fi
}
\newenvironment{example}{\begin{example*}\pushQED{\qedify{$\diamondsuit$}}}{\popQED\end{example*}}
\numberwithin{equation}{section}
\newcommand{\K}{\ensuremath{\Bbbk}} 
\newcommand{\Rbar}{\ensuremath{\overline{\mathbb R}}}
\newcommand{\Trop}{\ensuremath{\mathcal{T}\!rop}}
\DeclareMathOperator{\Hom}{Hom}
\DeclareMathOperator{\inn}{in}
\DeclareMathOperator{\val}{val}
\DeclareMathOperator{\im}{im}
\DeclareMathOperator{\trop}{trop}
\DeclareMathOperator{\supp}{supp}
\DeclareMathOperator{\spann}{span}
\DeclareMathOperator{\mult}{mult}
\DeclareMathOperator{\GL}{GL}
\DeclareMathOperator{\Gr}{Gr}
\newcommand{\PuiseuxC}{\ensuremath{\mathbb C \{\!\{t \} \! \}}}
\newcommand{\mon}{M}
\newcommand{\M}{\mathcal{M}}
\newcommand{\superimpose}[2]
{{\ooalign{$#1\@firstoftwo#2$\cr\hfil$#1\@secondoftwo#2$\hfil\cr}}}
\newcommand{\ttimes}{\hspace{0.4mm}{\mathpalette\superimpose{{\circ}
{\cdot}}}\hspace{0.4mm}}
\newcommand{\tplus}{\mathrel{\oplus}}
\newcommand{\bigtplus}{\bigoplus}
\begin{document}

\title{Tropical schemes, tropical cycles, and valuated matroids}

\author{Diane Maclagan}
\address{Mathematics Institute, University of Warwick, Coventry CV4 7AL, United 
Kingdom.}
\email{D.Maclagan@warwick.ac.uk}

\author{Felipe Rinc\'on}
\address{Department of Mathematics, University of Oslo, 0851 Oslo, Norway.}
\email{feliperi@math.uio.no}

\begin{abstract}
We show that the weights on a tropical variety can be recovered from
the tropical scheme structure proposed in
\cite{Giansiracusa2}, so there is a well-defined Hilbert-Chow morphism
from a tropical scheme to the underlying tropical cycle.  For a
subscheme of projective space given by a homogeneous ideal $I$ we show
that the Giansiracusa tropical scheme structure contains the same
information as the set of valuated matroids of the vector spaces
$I_d$ for $d \geq 0$. We also give a combinatorial criterion to
determine whether a given relation is in the congruence defining the
tropical scheme structure.
\end{abstract}

\maketitle

\section{Introduction}

The tropicalization of a subvariety $Y$ in the $n$-dimensional
algebraic torus $T$ is a polyhedral complex $\trop(Y)$ that is a
``combinatorial shadow'' of the original variety.  Some invariants of
$Y$, such as the dimension, are encoded in $\trop(Y)$.  The complex
$\trop(Y)$ comes equipped with positive integer weights on its
top-dimensional cells, called multiplicities, that make it into a {\em
  tropical cycle}.  This extra information encodes information about the
intersection theory of compactifications of the original variety $Y$;
see for example \cite{KatzPayne}.

In \cite{Giansiracusa2} the authors propose a notion of {\em tropical
  scheme structure} for tropical varieties, which takes the form of a
congruence on the semiring of tropical polynomials (see Section
\ref{s:preliminaries}).  When $Y \subset T$ is a subscheme defined by
an ideal $I \subset K[x_1^{\pm 1},\dots,x_n^{\pm 1}]$ this congruence
is denoted by $\Trop(I)$. In \cite{Giansiracusa2} the tropical scheme
structure is defined in the slightly more general context of $\mathbb
F_1$-schemes.

In this paper we investigate the relation between these tropical
schemes, ideals in the semiring of tropical polynomials, and
the theory of valuated matroids introduced by Dress and Wenzel
\cite{DressWenzel}.  
We also show that the tropical cycle of a scheme can be reconstructed
from the corresponding congruence.

Our first result is the following.
\begin{theorem} \label{t:equivalent}
Let $K$ be a field with a valuation $\val \colon K \rightarrow \Rbar :=
\mathbb R \cup \{ \infty \}$, and let $Y$ be a subscheme of $T \cong
(K^*)^n$ defined by an ideal $I \subset K[x_1^{\pm 1},\dots,x_n^{\pm
    1}]$.  Then any of the following three objects determines the others:
\begin{enumerate}
\item \label{item:congruence} The congruence $\Trop(I)$ on the
  semiring $S:=\Rbar[x_1^{\pm  1},\dots,x_n^{\pm 1}]$ of tropical Laurent polynomials;
\item \label{item:ideal} The ideal $\trop(I)$ in $S$;
\item \label{item:valuated} The set of valuated matroids of the vector spaces $I^h_d$, 
  where $I^h \subset K[x_0,\dots,x_n]$ is the homogenization of the ideal $I$, and
  $I^h_d$ is its degree $d$ part.
\end{enumerate}
\end{theorem}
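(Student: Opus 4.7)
The strategy is to pivot on the tropical ideal $\trop(I)$, proving $(1) \Leftrightarrow (2)$ and $(2) \Leftrightarrow (3)$ separately. The key structural tool is the graded decomposition of the homogenization $\trop(I^h)$ into tropical linear spaces, one for each homogeneous degree.

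For $(1) \Leftrightarrow (2)$, recall from \cite{Giansiracusa2} that the congruence $\Trop(I)$ is generated on $S$ by the bend relations of the polynomials $\trop(f)$, $f \in I$, which gives $(2) \Rightarrow (1)$ directly. For the converse I would characterize membership in $\trop(I)$ purely in terms of $\Trop(I)$: a tropical polynomial $g \in S$ should lie in $\trop(I)$ if and only if all bend relations of $g$ already belong to $\Trop(I)$. The forward inclusion is immediate from the definition of the congruence, while the more delicate reverse inclusion requires showing that this bend-closure condition is tight, with no spurious $g$ outside $\trop(I)$ satisfying it.

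For $(2) \Leftrightarrow (3)$, pass from $\trop(I) \subset S$ to $\trop(I^h) \subset \Rbar[x_0,\dots,x_n]$, a reversible operation. For each $d \geq 0$, identify the degree-$d$ piece $\trop(I^h)_d$ with a subset of $\Rbar^{E_d}$, where $E_d$ is the set of degree-$d$ monomials. Since $\trop(I^h)_d$ is the tropicalization of the vector space $I^h_d$, it is a tropical linear subspace of $\Rbar^{E_d}$. The theorem of Dress--Wenzel identifies tropical linear subspaces of $\Rbar^{E_d}$ bijectively with valuated matroids on $E_d$, and the valuated matroid so obtained is precisely that of item (3). Assembling the correspondence over all $d$ and then dehomogenizing yields the equivalence of (2) and (3).

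The main obstacle is the nontrivial direction of $(1) \Rightarrow (2)$, namely that the bend-closure characterization of $\trop(I)$ is tight rather than just necessary. A natural approach is to first establish $(2) \Leftrightarrow (3)$, so that the graded structure of $\trop(I^h)$ as a family of tropical linear spaces becomes available. In each fixed degree the bend-closure condition then translates into a well-understood vanishing condition for tropical linear spaces, allowing one to conclude that a bend-closed $g$ of degree $d$ must in fact belong to $\trop(I^h_d)$; summing across degrees and dehomogenizing then recovers $\trop(I)$ from $\Trop(I)$. A secondary technical point is ensuring the Dress--Wenzel correspondence is applied uniformly across all $d$, which relies on the general theory of tropical linear spaces rather than any special structure of $I^h_d$.
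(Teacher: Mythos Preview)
Your treatment of $(2)\Rightarrow(1)$ and $(2)\Leftrightarrow(3)$ is essentially what the paper does: the congruence is generated by the bend relations of elements of $\trop(I)$, and in each degree $\trop(I^h)_d$ is the set of vectors of the valuated matroid $\M(I^h_d)$, which is exactly the tropical-linear-space/valuated-matroid correspondence you invoke.

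For $(1)\Rightarrow(2)$ your route diverges from the paper's, and there is a genuine gap. The paper does not attempt to characterize $\trop(I)$ by a bend-closure condition. Instead it first shows that $\Trop(I)$ determines its homogenization $\Trop(I)^h$ and proves the nontrivial identity $\Trop(I)^h=\Trop(I^h)$ (Proposition~\ref{p:homogenizecongruence}); this rests on a technical chain-rewriting result (Proposition~\ref{p:chaininsupport}) that controls the supports and coefficients of the intermediate terms in any chain of generating relations. With $\Trop(I^h)$ in hand, the paper recovers in each degree the \emph{primal} tropical linear space $\ell_d=\trop(L_d)$ directly by point evaluation---$\mathbf{z}\in\ell_d$ iff $\trop(f)(\mathbf{z})=\trop(f)_{\hat{\mathbf{u}}}(\mathbf{z})$ for all $f\in I^h_d$ and all $\mathbf{u}$---and then obtains $\trop(I^h)_d$ as the dual $\ell_d^\perp$.

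Your plan skips the passage from $\Trop(I)$ to $\Trop(I^h)$. You invoke the graded structure ``in each fixed degree'', but that grading lives on the homogeneous side, and getting there from the Laurent congruence is precisely the content of Proposition~\ref{p:homogenizecongruence}; saying the passage between $\trop(I)$ and $\trop(I^h)$ is reversible does not address this, since at this point you only have the congruence, not the ideal. Even granting that step, your proposed criterion---$g\in\trop(I)$ iff all bend relations of $g$ lie in $\Trop(I)$---is plausible but not established, and proving it in a fixed degree amounts to the same duality fact the paper uses (membership in a tropical linear space is detected by the min-achieved-twice condition against every covector). The paper's argument is cleaner here because it reads off $\ell_d$ from the congruence by evaluation and then dualizes once, rather than trying to test membership in $\ell_d^\perp$ directly.
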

When the valuation on $K$ is trivial, this says that the tropical
scheme structure $\Trop(I)$ is equivalent to the information of the supports of
all polynomials in $I$, and also of the (standard) matroids of the vector
spaces $I^h_d$.  Theorem \ref{t:equivalent} is mostly proved in
Section~\ref{s:preliminaries}, though we postpone the discussion of
valuated matroids, including recalling their definition, to
Section~\ref{s:matroids}.  The version proved there
(Theorem~\ref{t:generalizedEquivalence}) also holds for a subscheme $Z
\subset \mathbb P^n$ given by a homogeneous ideal in
$K[x_0,\dots,x_n]$.

In Section~\ref{s:multiplicity} we show that the tropical cycle
structure on a tropical variety $\trop(Y)$ can be recovered from its
tropical scheme structure, answering the question raised in
\cite{Giansiracusa2}*{Remark 7.2.3}.

\begin{theorem} \label{t:mainthm}
Let $Y \subset T$ be a subscheme defined by an ideal $I \subset
K[x_1^{\pm 1},\dots,x_n^{\pm 1}]$.  The multiplicities of the maximal
cells in the tropical variety $\trop(Y)$ can be recovered from the
congruence $\Trop(I)$.
\end{theorem}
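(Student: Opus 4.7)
The strategy is to combine Theorem~\ref{t:equivalent} with a matroidal formula for the tropical multiplicity. By that theorem (in its homogeneous form, Theorem~\ref{t:generalizedEquivalence}), the congruence $\Trop(I)$ is equivalent data to the sequence of valuated matroids $\M_d := \M(I^h_d)$ on the sets of degree-$d$ monomials for $d \geq 0$. Since the initial matroid $\inn_w(\M_d)$ at any weight $w$ is computable from $\M_d$ alone, it is also determined by $\Trop(I)$; it therefore suffices to express the multiplicity $m_\sigma$ at a maximal cell $\sigma$ of $\trop(Y)$ as a combinatorial invariant of the $\M_d$'s.

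Recall that for $w$ in the relative interior of $\sigma$, the multiplicity $m_\sigma$ equals the length of the zero-dimensional quotient scheme $V(\inn_w(I^h))/T_\sigma$, where $T_\sigma$ is the subtorus corresponding to the lineality space $L_\sigma$ of $\sigma$. I would extract this length from matroid data as follows. Pick a generic direction $v \in \mathbb R^{n+1}$ and set $w' := w + \epsilon v$ for $\epsilon > 0$ small enough that $\inn_{w'}(I^h) = \inn_v(\inn_w(I^h))$ is a monomial ideal. For each $d$, the set of degree-$d$ standard monomials of $\inn_{w'}(I^h)$ is precisely the complement of the unique basis of the matroid $\inn_v(\inn_w(\M_d))$, and is thus determined by $\Trop(I)$. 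Because passing to initial ideals preserves the multi-graded Hilbert function of $\inn_w(I^h)$ under the $T_\sigma$-grading, this multi-graded Hilbert function is likewise determined. The multiplicity $m_\sigma$ is then the count of degree-$d$ standard monomials of $\inn_{w'}(I^h)$ whose exponent vectors lie in any fixed coset of $L_\sigma \cap \mathbb Z^{n+1}$, for any sufficiently large $d$.

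The main obstacle is the geometric-to-combinatorial identification in the last step: verifying that $m_\sigma$, defined via the primary decomposition of $\inn_w(I^h)$, genuinely equals the stated lattice count of standard monomials of the further degeneration $\inn_{w'}(I^h)$ under the $T_\sigma$-action. This requires tracking the $T_\sigma$-equivariant structure through the additional monomial degeneration, including controlling non-reduced components. A cleaner alternative, more in the spirit of this paper, is to pass via the $d$-th tropical Veronese embedding for $d$ sufficiently large: this realizes $\trop(Y^h)$ as a subcomplex of the Bergman fan of $\M_d$, whose multiplicities are given by an explicit combinatorial formula in $\M_d$ and which pull back, up to a known lattice index, to the multiplicities on $\trop(Y)$. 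Either approach expresses $m_\sigma$ purely in terms of the valuated matroids $\M_d$ and hence of $\Trop(I)$, completing the reconstruction.
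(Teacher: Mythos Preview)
Your strategy is genuinely different from the paper's. The paper does not go through Theorem~\ref{t:equivalent} or valuated matroids at all; instead it introduces a notion of \emph{initial congruence} $\inn_{\mathbf w}(J)$ for congruences $J$ on $S$, proves the commutation $\inn_{\mathbf w}(\Trop(I)) = \Trop(\inn_{\mathbf w}(I))$ (Proposition~\ref{p:inicommuteswithtrop}, which rests on the chain-rewriting Proposition~\ref{p:chaininsupport}), and then, after a monomial coordinate change aligning the lineality space of $\sigma$ with a coordinate subspace and homogenizing via Proposition~\ref{p:homogenizecongruence}, reads off $m_\sigma$ as the constant Hilbert polynomial of $\K[x_0,x_{d+1},\dots,x_n]/\inn_{\mathbf w}(I)^h$ by invoking \cite[Theorem~7.1.5]{Giansiracusa2}. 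That Hilbert-polynomial black box absorbs exactly the combinatorics you flag as ``the main obstacle.''

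Your route can be made to work, but two points need attention. First, the implicit identification of $\inn_{\mathbf w}(\M_d)$ with the matroid of $(\inn_{\mathbf w} I^h)_d$ is the degree-by-degree, matroid-level content of Proposition~\ref{p:inicommuteswithtrop}; you have not bypassed that commutation result, only restated it in the linear case (which is easier, but should still be justified). Second, your multiplicity formula is not correct as written. With the paper's convention $\M(I_d)$ is the matroid of $L_d=I_d^\perp$, so for a monomial initial ideal the unique basis \emph{is} the set of standard monomials, not its complement. More seriously, the number of degree-$d$ standard monomials of $\inn_{\mathbf w'}(I^h)$ lying in a fixed coset of $L_\sigma\cap\mathbb Z^{n+1}$ does not equal $m_\sigma$ for \emph{every} coset: cosets meeting the boundary of the degree-$d$ simplex give strictly smaller counts, and for some cosets this persists for all $d$ (try $I=\langle x_1^2-x_2^2\rangle$, where one coset always contains a single standard monomial). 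A correct formulation requires either restricting to suitably interior cosets or, more cleanly, first performing the unimodular coordinate change and extracting the constant Hilbert polynomial in the transverse variables---which is precisely the paper's argument. The Bergman-fan alternative is too undeveloped here to assess.
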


The classical Hilbert-Chow morphism takes a subscheme of $\mathbb P^n$
to the associated cycle in the Chow group of $\mathbb P^n$.
Theorem~\ref{t:mainthm} can thus be thought of as a tropical version
of this morphism.  

Finally, in Section~\ref{s:matroids} we investigate in more depth 
the structure of the congruence $\Trop(I)$, 
and use ideas from valuated matroids and tropical linear spaces to characterize when
a relation lives in $\Trop(I)$.
We also show that any tropical polynomial has a distinguished representative in its
equivalence class in $\Trop(I)$, and give a combinatorial
procedure to compute it.

\vspace{2mm}
\noindent {\bf Acknowledgments.} Both authors were partially
supported by the EPSRC grant EP/I008071/1. We thank Florian Block for
helpful conversations about matroids, and Jeff Giansiracusa for
discussion on \cite{Giansiracusa2} and comments on an earlier draft of
this paper.  The first author also thanks the Max Planck Institute
for Mathematics for hospitality while some of this paper was written.

\section{Tropical varieties and their scheme structure}
\label{s:preliminaries}

In this section we recall the necessary background on tropical
geometry and the definition of the tropical scheme structure proposed in
\cite{Giansiracusa2}.  We also develop some fundamental properties of
these congruences, leading to part of the proof of
Theorem~\ref{t:equivalent}.

Throughout this paper we denote by $\Rbar$ the tropical semiring
(or min-plus algebra) 
\[
\Rbar := (\mathbb R \cup \{ \infty \}, \tplus, \ttimes),
\]
where the tropical addition $\tplus$ is the usual minimum and tropical
multiplication $\ttimes$ is the usual addition.  We denote by $\mathbb
B$ the Boolean subsemiring of $\Rbar$ consisting of $\{ 0 ,\infty \}$
with the induced operations.  We denote by
\[
S :=\Rbar[x_1^{\pm 1}, \dots,x_n^{\pm 1}] \quad \text{ and } \quad \tilde{S} :=\Rbar[x_0,\dots,x_n]
\]
the semirings of tropical Laurent polynomials and tropical polynomials
in the variables $\mathbf{x}=(x_1,\dots,x_n)$ and
$\mathbf{x}=(x_0,\dots,x_n)$, respectively. Elements of $S$ or
$\tilde{S}$ are (Laurent) polynomials with coefficients in $\Rbar$
where all operations are to be interpreted tropically. Explicitly, if
$F \in \tilde{S}$ then $F$ has the form $F(\mathbf{x}) = \bigtplus_{\mathbf{u} \in
  \mathbb N^{n+1}}a_\mathbf{u}\ttimes \mathbf{x}^{\mathbf{u}}$,
where $a_{\mathbf{u}} \in \Rbar$ and all but finitely many of the
$a_{\mathbf{u}}$ equal $\infty$.  As a function, this is
$F(\mathbf{x})=\min_{\mathbf{u} \in \mathbb N^{n+1}}(a_\mathbf{u} + \mathbf{x}
\cdot {\mathbf{u}})$.  Elements of $S$ have the form $F(\mathbf{x}) =
\bigtplus_{\mathbf{u} \in \mathbb Z^n} a_{\mathbf{u}} \ttimes
\mathbf{x}^{\mathbf{u}}$, where again $a_{\mathbf{u}} \in \Rbar$ and
all but finitely many $a_{\mathbf{u}}$ equal $\infty$.  Note that
elements of $S$ and $\tilde{S}$ are regarded as tropical polynomials,
not functions.  By this we mean that $F(x)=x^2 \tplus 0$ and $G(x)=x^2
\tplus 1 \ttimes x \tplus 0$ are different as elements of $S$, even
though $F(w)= \min(2w,0) = \min(2w, w+1, 0) =G(w)$ for all $w \in
\mathbb R$.

We adopt the notational convention that lower case letters denote
elements of the conventional (Laurent) polynomial ring with
coefficients in $K$ and upper case letters denote tropical (Laurent)
polynomials with coefficients in $\Rbar$. 

The {\em support} of a (Laurent)
polynomial $f = \sum c_{\mathbf{u}} \mathbf{x}^{\mathbf{u}}$ is 
the subset of $\mathbb N^{n+1}$ (respectively $\mathbb Z^n$) defined by 
$\supp(f) := \{\mathbf{u} : c_{\mathbf{u}} \neq 0\}$.
Similarly, for a tropical (Laurent) polynomial $F =
\bigtplus a_{\mathbf{u}}\ttimes \mathbf{x}^{\mathbf{u}}$ we write 
$\supp(F) := \{\mathbf{u} : a_{\mathbf{u}} \neq \infty\}.$
We call $a_{\mathbf{u}}$ the coefficient in $F$ of the monomial 
$\mathbf{u}$. 

Fix a field $K$ with a valuation $\val \colon K \rightarrow \Rbar$.
We write $R$ for the valuation ring $\{c \in K : \val(c) \geq 0 \}$,
and $\K$ for the residue field $R/\{c \in K : \val(c) >0\}$. 

 A polynomial $f \in K[x_1^{\pm 1},\dots,x_n^{\pm1}]$ gives rise to a
 tropical polynomial $\trop(f) \in S$ as follows.  If $f =
 \sum_{\mathbf{u} \in \mathbb Z^{n}} c_{\mathbf{u}}
 \mathbf{x}^{\mathbf{u}}$, then
\begin{align*}
\trop(f)& := \textstyle \bigtplus_{\mathbf{u} \in \mathbb Z^{n}} \val(c_{\mathbf{u}}) \ttimes \mathbf{x}^{\mathbf{u}}.
\end{align*}
The {\em tropical hypersurface} defined by $f$ is
\[
\trop(V(f)) := \{ \mathbf{w} \in \mathbb R^n : \text{the minimum in } \trop(f)(\mathbf{w}) \text{ is achieved at least twice} \}.
\]
The tropicalization of a variety $Y \subset (K^*)^n$ defined by an
ideal $I \subset K[x_1^{\pm 1},\dots,x_n^{\pm1}]$ is
\[
\trop(Y) := \bigcap_{f \in I} \trop(V(f)).
\] 
For more details on tropical varieties see~\cite{TropicalBook}.

Classically, a subscheme of the $n$-dimensional torus $T$ is defined by an ideal in
the Laurent polynomial ring $K[x_1^{\pm 1},\dots,x_n^{\pm 1}]$.  There
are two possible ways to tropicalize this.  The first gives an ideal
in the semiring $S$ of tropical Laurent polynomials.

\begin{definition}
Let  $I \subset K[x_1^{\pm 1},\dots,x_n^{\pm 1}]$  be an ideal.  The  ideal $\trop(I)$ in the semiring
$S$ is generated by the tropical polynomials
$\trop(f)$ for $f \in I$:
\[\trop(I):= \langle \trop(f) : f \in I \rangle.\]
The definition of $\trop(I)$ is the same for an ideal in
$K[x_0,\dots,x_n]$.
\end{definition}
If the value group $\Gamma := \im \val$ of $K$ equals
all of $\Rbar$ and the residue field $\K$ is infinite then every tropical
polynomial in the ideal $\trop(I)$ has the form $\trop(f)$ for some $f \in I$.
Indeed, in that case $a \ttimes  \mathbf{x}^{\mathbf{u}} \ttimes  \trop(f)
= \trop(c\,\mathbf{x}^{\mathbf{u}}f)$ for any $c \in K$ with $\val(c) = a$, and
$\trop(f) \tplus \trop(g) = \trop(f + \alpha g)$ for a sufficiently
general $\alpha \in K$ with $\val(\alpha) = 0$. 

A different approach to tropicalizing the scheme defined by $I$ is
given in~\cite{Giansiracusa2}.  Here the ideal $I$ gives rise to a
{\em congruence} on $S$.  This is an equivalence relation on $S$ that
is closed under tropical addition and tropical multiplication, i.e., 
if $F_1 \sim G_1$ and $F_2 \sim G_2$ then 
$F_1 \tplus F_2 \sim G_1 \tplus G_2$ and $ F_1 \ttimes F_2 \sim G_1 \ttimes G_2$.
If $\phi \colon S \rightarrow R$ is a semiring homomorphism, then $\{
F \sim G : \phi(F) = \phi(G) \}$ is a congruence, and all congruences
on $S$ arise in this fashion.  This is a key reason to consider
congruences instead of only ideals.
For a subset $\{ (F_{\alpha}, G_{\alpha}) : \alpha \in A\}$ of $S
\times S$ there is a smallest congruence on $S$ containing $F_{\alpha}
\sim G_{\alpha}$ for all $\alpha \in A$, which we denote by $\langle
F_{\alpha} \sim G_{\alpha} \rangle_{\alpha \in A}$.  
All these notions also 
make sense for the semiring $\tilde{S}$.

The following definitions are taken from Definitions 5.1.1 and 6.1.3 of
\cite{Giansiracusa2}.

\begin{definition} \label{d:bendrelations}
Let $F$ be a tropical (Laurent) polynomial.  For $\mathbf{v} \in
\supp(F)$ we write $F_{\hat{\mathbf{v}}}$ for the tropical polynomial
obtained by removing the term involving $\mathbf{v}$ from $F$.
Explicitly, if $F = \bigtplus a_{\mathbf{u}} \ttimes \mathbf{x}^{\mathbf{u}}$, then 
\[\textstyle F_{\hat{\mathbf{v}}} := \bigtplus_{\mathbf{u} \neq
  \mathbf{v}}a_{\mathbf{u}} \ttimes  \mathbf{x}^{\mathbf{u}}.
\] 
The {\em bend relations} of $F$ are:
\[B(F) :=\{ F \sim F_{\hat{\mathbf{v}}} : \mathbf{v} \in \supp(F) \}.\]
Given an ideal $I \subset K[x_1^{\pm 1},\dots,x_n^{\pm 1}]$, the
{\em scheme-theoretic tropicalization} of $I$ is the congruence on $S$
\[\Trop(I) := \langle B(\trop(f)) : f \in I \rangle.\]
We use the same definition for $\Trop(I)$ if $I$ is a homogeneous ideal in $K[x_0,\dots,x_n]$.
\end{definition}

In~\cite{Giansiracusa2} the authors show that the tropical variety of
an ideal $I$ can be recovered from the congruence $\Trop(I)$ as 
\begin{equation*}\label{eq:dual}
\trop(V(I)) =
\Hom(S/\Trop(I),\mathbb{R}),
\end{equation*}
where the homomorphisms are semiring
homomorphisms.  Explicitly, this means that 
\begin{equation} \label{eq:dual2}
\trop(V(I)) = \{
\mathbf{w} \in \mathbb R^n : \trop(f)(\mathbf{w}) =
\trop(f)_{\hat{\mathbf{v}}}(\mathbf{w}) \text{ for all } f \in I, \mathbf{v}
\in \supp(f) \}.
\end{equation}

\begin{remark}
When $I$ is a binomial ideal, an equivalent congruence appears in the
work of Kahle and Miller \cite{KahleMiller}.  For a binomial ideal $I
\subset K[x_1,\dots,x_n]$ they define a congruence on the monoid
$\mathbb N^n$ generated by the relations $ \{ \mathbf{u} \sim
\mathbf{v} : \exists \lambda \in K^* \text{ such that }
\mathbf{x}^{\mathbf{u}} - \lambda \mathbf{x}^{\mathbf{v}} \in I \}$.
These relations, together with $\mathbf{u} \sim \infty$
whenever $\mathbf{x}^{\mathbf{u}} \in I$, generate the congruence
$\Trop(I)$ on $\mathbb B[x_1,\dots,x_n]$ when $K$ has the trivial
valuation. 
\end{remark}

In the rest of this section we develop some basic properties of these
congruences, leading to a proof of part of Theorem~\ref{t:equivalent}.
We will make repeated use of the following result on congruences on
$S$.  By a monomial in $S$ we mean a tropical polynomial whose support
has size one.

\begin{lemma}\label{lem:semimodule}
The congruence $\langle F_\alpha \sim G_\alpha \rangle_{\alpha \in A}$ 
on $S$ is equal to the transitive closure of the set $U$ of relations of the form
\[
 M \ttimes F_\alpha \tplus H  \sim M \ttimes G_\alpha \tplus H 
\]
and their reverse, where $\alpha \in A$, $H \in S$, and $M$ is a monomial in $S$.

Thus the congruence $\Trop(I)$ is equal to the transitive closure of
the set of relations of the form 
\begin{align*}
 & a \ttimes \trop(f)_{\hat{\mathbf{v}}} \tplus H \sim a \ttimes \trop(f) \tplus H
\end{align*}
and their reverse,
where $f \in I$, $\mathbf{v} \in \supp(f)$,
$a \in \mathbb R$, and $H \in S$.  
\end{lemma}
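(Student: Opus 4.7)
The plan is to show containment in both directions between the congruence $\langle F_\alpha \sim G_\alpha\rangle_{\alpha\in A}$ and the transitive closure of $U$ (interpreted, as is standard, as the equivalence relation generated by $U$). For the easy direction, every relation in $U$ lies in the congruence: starting from $F_\alpha \sim G_\alpha$, closure under tropical multiplication by the monomial $M$ and then under tropical addition by $H$ yields $\min(M+F_\alpha,H)\sim\min(M+G_\alpha,H)$. Since any congruence is already transitively closed, the transitive closure of $U$ is contained in $\langle F_\alpha\sim G_\alpha\rangle_{\alpha\in A}$.

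For the non-trivial direction, I would show the transitive closure $\overline U$ of $U$ is itself a congruence containing every $F_\alpha\sim G_\alpha$. Taking $M=0$ and $H=\infty$ in a relation of $U$ recovers $F_\alpha\sim G_\alpha$. The main work is closure under the two tropical operations. Closure under tropical addition by any $H'\in S$ is immediate: applying $\min(\cdot,H')$ to a relation in $U$ simply replaces $H$ with $\min(H,H')$, producing another relation in $U$, and a chain of $U$-steps witnessing $X\sim Y$ in $\overline U$ transforms accordingly. Closure under tropical multiplication by a monomial $M'$ is similar: a $U$-relation becomes one with monomial $M+M'$ and tail $M'+H$. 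For multiplication by a general tropical polynomial $P=\min_i P_i$, I would use $P+X=\min_i(P_i+X)$ and combine closure under monomial multiplication with closure under min-addition, replacing one summand at a time, to conclude $P+X\sim P+Y$ in $\overline U$.

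The second statement of the lemma then follows from the first by substituting $F_\alpha=\trop(f)$ and $G_\alpha=\trop(f)_{\hat{\mathbf v}}$ for $f\in I$ and $\mathbf v\in\supp(f)$. The remaining point is that a general monomial $M=a+\mathbf x\cdot\mathbf w\in S$ can be replaced by the scalar $a\in\mathbb R$: since $I$ is an ideal, $\mathbf x^{\mathbf w}f\in I$, and one has $M+\trop(f)=a+\trop(\mathbf x^{\mathbf w}f)$ together with the matching identity $M+\trop(f)_{\hat{\mathbf v}}=a+\trop(\mathbf x^{\mathbf w}f)_{\widehat{\mathbf v+\mathbf w}}$, so the relation is already of the claimed form. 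Degenerate monomials with coefficient $\infty$ produce only reflexive relations and can be discarded.

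The main obstacle is the verification of closure under tropical multiplication by a non-monomial. The decomposition $P=\min_i P_i$ followed by chaining of $U$-steps is conceptually routine, but requires careful bookkeeping to confirm that the composite chain really witnesses equivalence in $\overline U$ (rather than just an informal pointwise conclusion on tropical polynomials). Once that bookkeeping is set up, the rest of the proof is syntactic.
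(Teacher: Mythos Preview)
Your argument is correct and complete in outline; the only bookkeeping you flag (chaining for multiplication by a non-monomial $P=\min_i P_i$) is indeed routine once one observes that closure under $\min(\cdot,H')$ lets you swap one $P_i+X$ for $P_i+Y$ at a time inside $\min_i(P_i+X)$.

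Your route differs from the paper's. The paper does not directly verify that $\overline U$ is a congruence. Instead it invokes \cite[Lemma~2.4.5]{Giansiracusa2}, which identifies the congruence with the transitive closure of the sub\emph{semiring} of $S\times S$ generated by the pairs $(F_\alpha,G_\alpha)$, $(G_\alpha,F_\alpha)$, and $(1,1)$; the paper then argues in two further steps that this transitive closure coincides with that of the $S$-sub\emph{semimodule} $N$ generated by the same pairs, and finally that every element of $N$ can be reached by a chain in $U$ (by writing an arbitrary semimodule element as in \eqref{eq:relation}, splitting each coefficient $Q_i$ into monomials, and swapping one $F_i$ for $G_i$ at a time). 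Your approach is more self-contained---it avoids the external citation and the detour through the subsemiring---while the paper's approach makes the semimodule structure of $N$ explicit, which is convenient for the later use in Proposition~\ref{p:chaininsupport}. The derivation of the second claim (absorbing the monomial part of $M$ into $f$ via $\mathbf x^{\mathbf w}f\in I$) is the same in both.
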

\begin{proof}

By \cite{Giansiracusa2}*{Lemma 2.4.5} we know that $\langle F_\alpha
\sim G_\alpha \rangle_{\alpha \in A}$ is the transitive closure of the
subsemiring of $S \times S$ generated by the elements $F_\alpha \sim
G_\alpha$, $G_\alpha \sim F_\alpha$, and $1 \sim 1$.  We first show
that this is in fact the transitive closure $T$ of the
$S$-subsemimodule $N$ (as opposed to the $S$-subsemiring) of $S \times
S$ generated by these elements.  Let $F \sim G$ and $F' \sim G'$ be
elements of $T$. We will show that their tropical product and sum
is also in $T$, and thus $T$ is a subsemiring of $S \times S$, as desired.  By
definition, there exist chains $F \sim H_1 \sim \dotsb \sim H_l
\sim G$ and $F' \sim H'_1 \sim \dotsb \sim H'_{l'} \sim G'$ 
of relations in $N$.  The fact that $F \ttimes F' \sim G \ttimes G'$ is
in $T$ follows from the chain of relations in $N$
 \[
  F \ttimes F' \sim H_1 \ttimes F' \sim  \dotsb \sim H_{l} \ttimes F' \sim G \ttimes F'
  \sim G \ttimes H'_1 \sim \dotsb \sim G \ttimes H'_{l'} \sim G \ttimes G'.
 \]
A similar argument shows that the tropical sum $F \tplus F' \sim G \tplus G'$ is in $T$.

We now prove that all relations in $N$ are in the transitive
closure of the set $U$. Any relation in $N$ has the form
 \begin{equation}\label{eq:relation}
\textstyle (\bigtplus_{i=1}^s Q_i \ttimes C_i) \tplus  Q  \sim  (\bigtplus_{i=1}^s Q_i \ttimes D_i) \tplus Q ,
 \end{equation}
 where all the $Q_i$ are in $S$, all the relations $C_i \sim D_i$ have the form
 $F_\alpha \sim G_\alpha$ or their reverse, and $Q \in S$.  By
 allowing some of the relations $C_i \sim D_i$ to be equal, we can
 assume that the $Q_i$ are monomials in $S$.  For $l = 0, 1,
 \dotsc, s$, let $H_l \in S$ be defined by
 \[
  \textstyle H_l :=(\bigtplus_{i=1}^l  Q_i \ttimes D_i) \tplus ( \bigtplus_{i=l+1}^s Q_{i} \ttimes C_{i}) \tplus Q.
 \]
 Note that $H_0 \sim H_1 \sim \dotsb \sim H_s$ is a chain
 of relations in $U$. The relation \eqref{eq:relation} is simply $H_0
 \sim H_s$, so it is in the transitive closure of $U$.

The last claim of the lemma follows from the fact that $\Trop(I)$ is generated by
the relations $\trop(f) \sim \trop(f)_{\hat{\mathbf{v}}}$ for $f \in
I$.  If $f \in I$ then $\mathbf{x}^{\mathbf{u}}f \in I$, so we may replace the
tropical monomial $M$ by a scalar $a$.  
\end{proof}

\begin{remark}
If the value group $\Gamma = \im \val$ equals all of $\Rbar$ then for
all scalars $a \in \mathbb R$ we can find $c \in K$ with
$\val(c) = a$, so $a \ttimes \trop(f) = \trop(c f)$. Therefore, in
this case the congruence $\Trop(I)$ can be described as the transitive
closure of the set of relations of the form
$\trop(f)_{\hat{\mathbf{v}}} \tplus H \sim \trop(f) \tplus H$ and
their reverse, where $f \in I$, $\mathbf{v} \in \supp(f)$, and $H \in
S$.
\end{remark}

The following proposition is the key technical result that is needed to 
prove Theorem~\ref{t:mainthm} and parts of Theorem~\ref{t:equivalent}. 

\begin{proposition} \label{p:chaininsupport}
Let $I$ be an ideal in $K[x_1^{\pm 1},\dots,x_n^{\pm 1}]$, and let $F
\sim G$ be a relation in the congruence $\Trop(I)$ on $S$, where $F =
\bigtplus \alpha_{\mathbf{u}} \ttimes \mathbf{x}^{\mathbf{u}}$ and $G=
\bigtplus \beta_{\mathbf{u}} \ttimes \mathbf{x}^{\mathbf{u}}$.  Then there
is a chain $F =F_0\sim F_1 \sim \dots \sim F_s \sim F_{s+1} =G$ of
relations in $\Trop(I)$ satisfying the following two properties.
\begin{enumerate}[label={\normalfont (\alph*)}]
 \item Each $F_i \sim F_{i+1}$ has
the form $m \ttimes \trop(g) \tplus H \sim
m \ttimes \trop(g)_{\hat{\mathbf{v}}} \tplus H$ or the reverse, for some $g \in
I$, $\mathbf{v} \in \supp(g)$, $H \in S$, and $m \in \mathbb R$.
 \item The coefficient $\gamma_{\mathbf{u},i}$ of $\mathbf{u}$ in $F_i$ equals either $\alpha_{\mathbf{u}}$ or $\beta_{\mathbf{u}}$. 
\end{enumerate}
\end{proposition}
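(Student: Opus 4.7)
My plan is to start with a chain of elementary bend relations from $F$ to $G$ provided by Lemma~\ref{lem:semimodule}, which already satisfies (a), and then modify it so that (b) also holds. The main tool is that $\Trop(I)$ is a congruence, hence closed under tropical addition: taking the tropical sum of both sides of a bend relation $\min(m+\trop(g), H) \sim \min(m+\trop(g)_{\hat{\mathbf{v}}}, H)$ with any $H' \in S$ yields another such relation with $H$ replaced by $\min(H, H')$.

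The first step is a reduction via $P := \min(F,G)$. Since $\min(F,F) \sim \min(F,G)$ and $\min(F,G) \sim \min(G,G)$, we obtain $F \sim P$ and $P \sim G$. The coefficients of $P$ are $\min(\alpha_{\mathbf{u}}, \beta_{\mathbf{u}}) \in \{\alpha_{\mathbf{u}}, \beta_{\mathbf{u}}\}$, so proving the proposition separately for $F \sim P$ and $P \sim G$ and then concatenating the resulting chains yields one for $F \sim G$ whose intermediate coefficients lie in $\{\alpha_{\mathbf{u}}, \min(\alpha_{\mathbf{u}},\beta_{\mathbf{u}}), \beta_{\mathbf{u}}\} = \{\alpha_{\mathbf{u}}, \beta_{\mathbf{u}}\}$. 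Hence we may assume $F \geq G$ monomially.

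In this reduced case, I would take a chain $F = F_0 \sim F_1 \sim \cdots \sim F_{t+1} = G$ from Lemma~\ref{lem:semimodule} and replace each intermediate $F_i$ by $\min(F_i, F)$. This preserves the endpoints (as $\min(F,F) = F$ and $\min(G,F) = G$), each step remains an elementary bend by the observation above, and the new intermediate coefficients $\min(\gamma_{\mathbf{u},i}, \alpha_{\mathbf{u}})$ are bounded above by $\alpha_{\mathbf{u}}$. In particular, for monomials with $\alpha_{\mathbf{u}} = \beta_{\mathbf{u}}$ the coefficient is constantly $\alpha_{\mathbf{u}}$, as required.

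The main obstacle is ruling out intermediate values other than $\alpha_{\mathbf{u}}$ and $\beta_{\mathbf{u}}$ for monomials with $\alpha_{\mathbf{u}} > \beta_{\mathbf{u}}$. Because the tropical semiring has only $\min$ and no $\max$, a symmetric ``bound from below'' is not directly available, so a combinatorial argument is needed. I expect to proceed by induction---on the length of the chain or on the number of intermediate coefficients not yet in $\{\alpha_{\mathbf{u}}, \beta_{\mathbf{u}}\}$---using the structural fact that each elementary bend changes exactly one monomial's coefficient. Consecutive bends at distinct monomials commute, while consecutive bends at the same monomial can be consolidated or rerouted; the goal is to arrange that each monomial is bent at most once in the final chain, so its coefficient jumps directly from $\alpha_{\mathbf{u}}$ to $\beta_{\mathbf{u}}$. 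Carrying out this reorganization rigorously, while maintaining the elementary-bend form required by (a), is the most delicate part of the argument.
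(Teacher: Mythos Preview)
Your preliminary reduction via $P=\min(F,G)$ and the ``$\min$ with $F$'' trick are both valid moves that the paper does not make, but they do not buy what you claim. The assertion that ``for monomials with $\alpha_{\mathbf{u}}=\beta_{\mathbf{u}}$ the coefficient is constantly $\alpha_{\mathbf{u}}$'' is false: taking $\min$ with $F$ only bounds $\min(\gamma_{\mathbf{u},i},\alpha_{\mathbf{u}})$ from \emph{above} by $\alpha_{\mathbf{u}}$, but an intermediate $\gamma_{\mathbf{u},i}$ may well dip strictly below $\alpha_{\mathbf{u}}$ and later return. So after your reductions you still face exactly the same difficulty for \emph{all} monomials, not just those with $\alpha_{\mathbf{u}}>\beta_{\mathbf{u}}$, and your two preliminary steps, while correct, do not shorten the real work.

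That real work---arranging that each monomial label appears at most once---is precisely what the paper does, via six explicit local rewrite rules on the chain. Your intuition that ``consecutive bends at distinct monomials commute'' is the right picture, but the commutation is not free: when two bends both point the same way, say $F_{i-1}\stackrel{\mathbf{v}}{\rightarrow}F_i\stackrel{\mathbf{v}'}{\rightarrow}F_{i+1}$ with witnessing polynomials $g_{i-1},g_i\in I$, swapping them can force you to replace $g_{i-1}$ by a new element $h=g_{i-1}-(c_{\mathbf{v}'}/d_{\mathbf{v}'})\,g_i\in I$ in order to preserve the elementary-bend form required by (a). This is a genuine algebraic step (it uses that $I$ is closed under such linear combinations), not a purely combinatorial rearrangement, and neither your reduction to $F\geq G$ nor the upper bound simplifies it. Your outline is thus essentially the paper's approach with the crucial case left unproved; to complete it you must carry out these local moves explicitly and verify that the new witnesses remain in $I$ with the correct supports.
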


\begin{proof}
By Lemma~\ref{lem:semimodule} there is a chain $F=F_0
\sim F_1 \sim \dots \sim F_s \sim F_{s+1}=G$ of relations in $\Trop(I)$ 
with the property that
for each $i$ we have $F_i \sim F_{i+1}$ equal to
$m_i \ttimes \trop(g_i) \tplus H_i \sim
m_i \ttimes \trop(g_i)_{\hat{\mathbf{v}}} \tplus H_i$ or the reverse, for some
polynomial $g_i \in I$, $\mathbf{v} \in \supp(g_i)$, $H_i \in S$, and
$m_i \in \mathbb R$.
We now show that we can modify this chain to 
get a chain where the coefficients have the required form.
We represent the given chain by a path of length $s+1$ with vertices
labeled by the $F_i$ and an oriented edge labeled by $\mathbf{v}$ from
$m_i \ttimes \trop(g_i) \tplus H_i$ to $m_i \ttimes \trop(g_i)_{\hat{\mathbf{v}}} \tplus H_i$.  

We claim that we can locally modify the path by switching the
order of adjacent edges or amalgamating edges if the labels agree,
in the following six ways:
\begin{enumerate}
\item $F_{i-1} \stackrel{\mathbf{v}}{\rightarrow} F_{i}
  \stackrel{\mathbf{v}'}{\leftarrow} F_{i+1}$ can be replaced by 
$F_{i-1} \stackrel{\mathbf{v}'}{\leftarrow} F'_{i}
  \stackrel{\mathbf{v}}{\rightarrow} F_{i+1}$,

\item $F_{i-1} \stackrel{\mathbf{v}}{\rightarrow} F_{i}
  \stackrel{\mathbf{v}'}{\rightarrow} F_{i+1}$ can be replaced by 
 $F_{i-1} \stackrel{\mathbf{v}'}{\rightarrow} F'_{i}
  \stackrel{\mathbf{v}}{\rightarrow} F_{i+1}$,
\item $F_{i-1} \stackrel{\mathbf{v}}{\leftarrow} F_{i}
\stackrel{\mathbf{v}'}{\leftarrow} F_{i+1}$ can be replaced by 
$F_{i-1}
    \stackrel{\mathbf{v}'}{\leftarrow} F_{i+1}' \stackrel{\mathbf{v}}{\leftarrow}
    F_{i+1}$,
\item $F_{i-1} \stackrel{\mathbf{v}}{\leftarrow} F_{i}
  \stackrel{\mathbf{v}}{\rightarrow} F_{i+1}$ can be replaced by one
  of $F_{i-1} \stackrel{\mathbf{v}}{\leftarrow} F_{i+1}$, $F_{i-1}
  \stackrel{\mathbf{v}}{\rightarrow} F_{i+1}$, or $F_{i-1} = F_{i+1}$,

\item $F_{i-1} \stackrel{\mathbf{v}}{\rightarrow} F_i
  \stackrel{\mathbf{v}}{\rightarrow} F_{i+1}$ 
can be replaced by $F_{i-1}\stackrel{\mathbf{v}}{\rightarrow} F_{i+1}$, and 

\item $F_{i-1} \stackrel{\mathbf{v}}{\leftarrow} F_i
  \stackrel{\mathbf{v}}{\leftarrow} F_{i+1}$ 
can be replaced by $F_{i-1}\stackrel{\mathbf{v}}{\leftarrow} F_{i+1}$.
\end{enumerate}

By repeated use of the first of these operations we may assume that
all left-pointing arrows in the path come before all right-pointing arrows.  If
$\mathbf{v}$ appears as an arrow label on more than one arrow, by
repeated use of the second and third operations we may assume that the
left-pointing arrows labeled by $\mathbf{v}$ are the last
left-pointing arrows, and the right-pointing arrows labeled by
$\mathbf{v}$ are the first right-pointing arrows.  By repeated use of
the last three operations we can then replace these arrows by at most
one arrow labeled by $\mathbf{v}$.  In this fashion we get a new chain
$F=F_0 \sim F_1 \sim \dots \sim F_s \sim F_{s+1}=G$ where each arrow
label occurs exactly once.  As the coefficient of $\mathbf{v}$ in
$F_i$ equals that in $F_{i+1}$ unless the arrow between $F_i$ and
$F_{i+1}$ is labeled by $\mathbf{v}$, this means that the coefficient
of $\mathbf{v}$ changes at most once in the path from $F$ to $G$, so for
all $F_i$ the coefficient of $\mathbf{v}$ equals the coefficient of
$\mathbf{v}$ in either $F$ or $G$.

It thus suffices to show that these six arrow replacements can be
made.  In each case we make use of the fact that the coefficients of
$F_{i-1}, F_i$ and $F_{i+1}$ all agree for monomials $\mathbf{u} \neq
\mathbf{v}, \mathbf{v}'$.

$\bullet$ {\bf Case} $F_{i-1} \stackrel{\mathbf{v}}{\rightarrow} F_{i}
\stackrel{\mathbf{v}'}{\leftarrow} F_{i+1}$.  By assumption we have $F_{i-1} =
m_{i-1} \ttimes \trop(g_{i-1}) \tplus H_{i-1}$, $F_i = m_{i-1} \ttimes
\trop(g_{i-1})_{\hat{\mathbf{v}}} \tplus H_{i-1} = m_{i+1} \ttimes
\trop(g_{i+1})_{\hat{\mathbf{v}}'} \tplus H_{i+1}$, and $F_{i+1} =
m_{i+1} \ttimes \trop(g_{i+1}) \tplus H_{i+1}$.
Let $c_{\mathbf{v}}$ be the coefficient of
$\mathbf{x}^{\mathbf{v}}$ in $g_{i-1}$, and let $d_{\mathbf{v}'}$ be
the coefficient of $\mathbf{x}^{\mathbf{v}'}$ in $g_{i+1}$.  Let
$H'_{i-1} = H_{i-1} \tplus m_{i+1} \ttimes \val(d_{\mathbf{v}'})
\ttimes \mathbf{x}^{\mathbf{v}'}$, and $ H'_{i+1} = H_{i+1} \tplus
m_{i-1} \ttimes \val(c_{\mathbf{v}}) \ttimes \mathbf{x}^{\mathbf{v}}$.
Set $F'_i = m_{i-1} \ttimes \trop(g_{i-1}) \tplus H'_{i-1}$, and note
that this equals $m_{i+1} \ttimes \trop(g_{i+1})\tplus H'_{i+1}$.  In
addition, $F_{i-1} =
m_{i+1} \ttimes \trop(g_{i+1})_{\hat{\mathbf{v}}'} \tplus H'_{i+1}$ and
$F_{i+1} = m_{i-1} \ttimes \trop(g_{i-1})_{\hat{\mathbf{v}}} \tplus H'_{i-1}$.
We then have the relationship $F_{i-1}
\stackrel{\mathbf{v}'}{\leftarrow} F'_{i}
\stackrel{\mathbf{v}}{\rightarrow} F_{i+1}$ as required.

$\bullet$ {\bf Case} $F_{i-1} \stackrel{\mathbf{v}}{\rightarrow} F_{i}
  \stackrel{\mathbf{v}'}{\rightarrow} F_{i+1}$.  By assumption 
 $F_{i-1}=m_{i-1} \ttimes \trop(g_{i-1}) \tplus H_{i-1}$, $F_i =
  m_{i-1}\ttimes \trop(g_{i-1})_{\hat{\mathbf{v}}} \tplus H_{i-1} =
  m_i \ttimes \trop(g_i) \tplus H_i$, and $F_{i+1} =
  m_i \ttimes \trop(g_i)_{\hat{\mathbf{v}}'} \tplus H_i$.
	Write $g_{i-1} = \sum c_{\mathbf{u}} \mathbf{x}^{\mathbf{u}}$, and $g_i = \sum
  d_{\mathbf{u}} \mathbf{x}^{\mathbf{u}}$.

 Set $H'_{i} = H_i \tplus  m_{i-1} \ttimes \val(c_{\mathbf{v}}) \ttimes \mathbf{x}^{\mathbf{v}}$.  We have $F_{i-1} =
m_i \ttimes \trop(g_i) \tplus H'_{i}$.  Set
 $F_i'=m_i \ttimes \trop(g_i)_{\hat{\mathbf{v}}'} \tplus H'_{i}$.  We now
 have two further subcases.  Let $b_{\mathbf{v}'}$ be the coefficient
 of $\mathbf{v}'$ in $H_i$.
\begin{enumerate}
\item $b_{\mathbf{v}'} \leq m_{i-1}+\val(c_{\mathbf{v}'}) $.  Set
  $H'_{i-1}= (H_{i-1})_{\hat{\mathbf{v}}'} \tplus 
  b_{\mathbf{v}'} \ttimes \mathbf{x}^{\mathbf{v}'}$.  In this case $F_i'$
  equals $m_{i-1}\ttimes \trop(g_{i-1}) \tplus  H'_{i-1}$.  We then have
  $F_{i+1} =
  m_{i-1} \ttimes \trop(g_{i-1})_{\hat{\mathbf{v}}} \tplus H'_{i-1}$, and thus
  $F_{i-1} \stackrel{\mathbf{v}'}{\rightarrow} F'_i
  \stackrel{\mathbf{v}}{\rightarrow} F_{i+1}$ as required.

\item $b_{\mathbf{v}'}> m_{i-1}+\val(c_{\mathbf{v}'})$.  This implies
  in particular that $c_{\mathbf{v}'} \neq 0$.  Let $h = g_{i-1} -
  (c_{\mathbf{v}'}/d_{\mathbf{v}'}) g_i = \sum (c_{\mathbf{u}} -
  d_{\mathbf{u}} (c_{\mathbf{v}'}/d_{\mathbf{v}'})) \mathbf{x}^{\mathbf{u}}$.  By
  construction $\mathbf{v}' \not \in \supp(h)$.  Set $H'_{i+1} = F_{i+1}$.

  We claim that $F'_i = m_{i-1} \ttimes \trop(h) \tplus  H'_{i+1}$.  
The coefficient of $\mathbf{v}'$ in $F_i$ is 
$m_i + \val(d_{\mathbf{v}'})$, since $F_i \neq F_{i+1}$, so comparing
the two different expressions for $F_i$ we see that
$m_i + \val(d_{\mathbf{v}'}) \leq m_{i-1}+\val(c_{\mathbf{v}'})$.  Thus
$\val(c_{\mathbf{v}'}/d_{\mathbf{v}'}) \geq m_i-m_{i-1}$.  The
coefficient of $\mathbf{u}$ in $m_{i-1} \ttimes \trop(h)$ is $m_{i-1} +
\val(c_{\mathbf{u}}-
d_{\mathbf{u}}(c_{\mathbf{v}'}/d_{\mathbf{v}'}))$, which is at least
$m_{i-1} + \min(\val(c_{\mathbf{u}}), \val(d_{\mathbf{u}}) +
\val(c_{\mathbf{v}'}/d_{\mathbf{v}'}))$.  This in turn is at
least $\min(m_{i-1}+\val(c_{\mathbf{u}}), m_i +
\val(d_{\mathbf{u}}))$.  For $\mathbf{u} \neq \mathbf{v}, \mathbf{v}'$
both terms in this minimum are at least the coefficient of
$\mathbf{u}$ in $F_{i-1}$, which equals that in $F_{i+1}=H'_{i+1}$.  For
$\mathbf{u}=\mathbf{v}$, $m_{i-1} + \val(c_{\mathbf{v}}) <
m_i+\val(d_{\mathbf{v}})$, since $F_{i-1} \neq F_i$, so
$\val(c_{\mathbf{v}}-d_{\mathbf{v}}(c_{\mathbf{v}'}/d_{\mathbf{v}'}))
= \val(c_{\mathbf{v}})$. The coefficient of $\mathbf{v}$ in
$m_{i-1}  \ttimes  \trop(h) \tplus H'_{i+1}$ is then equal to $m_{i-1}+\val(c_{\mathbf{v}})$,
which equals the coefficient in $F'_i$.  Finally, the coefficient
of $\mathbf{v}'$ is $b_{\mathbf{v}'}$, which is also the coefficient
of $\mathbf{v}'$ in $F'_i$.

Since the coefficient of $\mathbf{v}$ is $m_{i-1}+\val(c_{\mathbf{v}})
<\infty$, we have $\mathbf{v} \in \supp(\trop(h))$, and thus $F_{i+1}
= m_{i-1} \ttimes  \trop(h)_{\hat{\mathbf{v}}} \tplus  H'_{i+1}$.
 This again gives the relation $F_{i-1}
 \stackrel{\mathbf{v}'}{\rightarrow} F'_i
 \stackrel{\mathbf{v}}{\rightarrow} F_{i+1}$.
\end{enumerate}

$\bullet$ {\bf Case} $F_{i-1} \stackrel{\mathbf{v}}{\leftarrow} F_{i}
  \stackrel{\mathbf{v}'}{\leftarrow} F_{i+1}$.  This is identical to the previous
  case with the roles of $F_{i-1}$ and $F_{i+1}$ reversed.  

$\bullet$ {\bf Case} $F_{i-1} \stackrel{\mathbf{v}}{\leftarrow} F_{i}
  \stackrel{\mathbf{v}}{\rightarrow} F_{i+1}$.  By assumption we have  $F_i = 
  m_i \ttimes \trop(g_i)\tplus H_i = m'_i \ttimes \trop(g'_i) \tplus H'_i$, $F_{i-1} =
  m_i \ttimes \trop(g_i)_{\hat{\mathbf{v}}} \tplus H_i$, and
  $F_{i+1}= m'_i\ttimes \trop(g_{i+1})_{\hat{\mathbf{v}}} \tplus H'_i$.
  Let $\gamma_{\mathbf{v},j}$ be the coefficient of $\mathbf{v}$ in $F_j$, for
  $j=i-1,i,i+1$.  
  We have $\gamma_{\mathbf{v},i-1}, \gamma_{\mathbf{v},i+1} >
  \gamma_{\mathbf{v},i}$.  If $\gamma_{\mathbf{v},i-1} = \gamma_{\mathbf{v},i+1}$ then
  $F_{i-1}=F_{i+1}$, so we may replace $F_{i-1}
  \stackrel{\mathbf{v}}{\leftarrow} F_{i} \stackrel{\mathbf{v}}{\rightarrow} F_{i+1}$ by
  just $F_{i-1}=F_{i+1}$.  If $\gamma_{\mathbf{v},i-1} < \gamma_{\mathbf{v},i+1}$, 
  let $\tilde{H}_i = F_{i+1}$.  Then $F_{i-1} =
  (\gamma_{\mathbf{v},i-1}-\gamma_{\mathbf{v},i}) \ttimes m_i' \ttimes \trop(g'_i) \tplus \tilde{H}_i$,
  and $F_{i+1} = (\gamma_{\mathbf{v},i-1}-\gamma_{\mathbf{v},i}) \ttimes m_i' \ttimes \trop(g'_i)_{\hat{\mathbf{v}}} \tplus 
  \tilde{H}_i$, so we can replace $F_{i-1} \stackrel{\mathbf{v}}{\leftarrow}
  F_{i} \stackrel{\mathbf{v}}{\rightarrow} F_{i+1}$ by $F_{i-1}
  \stackrel{\mathbf{v}}{\rightarrow} F_{i+1}$.  If $\gamma_{\mathbf{v},i-1} >
  \gamma_{\mathbf{v},i+1}$ then with the same construction we can replace
  $F_{i-1} \stackrel{\mathbf{v}}{\leftarrow} F_{i} \stackrel{\mathbf{v}}{\rightarrow}
  F_{i+1}$ by $F_{i-1} \stackrel{\mathbf{v}}{\leftarrow} F_{i+1}$.

$\bullet$ {\bf Case} $F_{i-1} \stackrel{\mathbf{v}}{\rightarrow} F_i
  \stackrel{\mathbf{v}}{\rightarrow} F_{i+1}$.  By assumption
  $F_{i-1}= m_{i-1} \ttimes \trop(g_{i-1}) \tplus H_{i-1}$, $F_i =
  m_{i-1}\ttimes \trop(g_{i-1})_{\hat{\mathbf{v}}} \tplus H_{i-1} =
  m_i \ttimes \trop(g_i)\tplus H_i$, and $F_{i+1} = m_i \ttimes
  \trop(g_i)_{\hat{\mathbf{v}}} \tplus H_i$.  Set $H'_i= F_{i+1}$.
  Then $F_{i-1} = m_{i-1} \ttimes \trop(g_{i-1}) \tplus H'_i$, and
  $F_{i+1} = m_{i-1} \ttimes \trop(g_{i-1})_{\hat{\mathbf{v}}} \tplus 
  H'_i$, so we may replace $F_{i-1}
  \stackrel{\mathbf{v}}{\rightarrow} F_i
  \stackrel{\mathbf{v}}{\rightarrow} F_{i+1}$ by $F_{i-1}
  \stackrel{\mathbf{v}}{\rightarrow} F_{i+1}$.
  
$\bullet$ {\bf Case} $F_{i-1} \stackrel{\mathbf{v}}{\leftarrow} F_i \stackrel{\mathbf{v}}{\leftarrow}
  F_{i+1}$.  This is identical to the previous case, with the roles of
  $F_{i-1}$ and $F_{i+1}$ reversed.
\end{proof}

The first of the six cases in the previous proof is the only one that cannot
be reversed.  Indeed, in the congruence $\Trop(\langle x+y \rangle)$
we have the relations $x \sim x \! \tplus \! y  \sim y$ but neither $x \sim
\infty$ nor $y \sim \infty$.

A congruence $J$ on $\tilde{S}$ or $S$ is {\em homogeneous} with
respect to a grading 
by $\deg(x_i) = \delta_i \in \mathbb Z$ if $J$ is generated by
relations of the form $F \sim G$ where $F$ and  $G$ are both homogeneous of
the same degree. For a tropical polynomial $F$ we write $F_d$ for its
homogeneous component of degree $d$, where $d \in \mathbb Z$.

\begin{proposition}  \label{l:troplineality}
Let $J$ be a homogeneous congruence on $\tilde{S}$ or $S$.  If $F \sim
G \in J$, then $F_d \sim G_d \in J$ for all $d \in \mathbb Z$.
\end{proposition}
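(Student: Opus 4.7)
The plan is to reduce the given relation $F \sim G$ to a chain of ``atomic'' relations via the first part of Lemma~\ref{lem:semimodule}, and then apply the degree-$d$ projection $(-)_d$ term by term. Since $J$ is homogeneous, it is generated by a family $\{F_\alpha \sim G_\alpha\}_{\alpha \in A}$ in which $F_\alpha$ and $G_\alpha$ are both homogeneous of a common degree $e_\alpha$. The first assertion of Lemma~\ref{lem:semimodule} applies to an arbitrary congruence and its chosen set of generators, so $F \sim G$ can be realised by a finite chain $F = H_0 \sim H_1 \sim \dotsb \sim H_{k+1} = G$ in which each step has the form
\[
\min(M + F_\alpha, H) \sim \min(M + G_\alpha, H)
\]
for some generator $F_\alpha \sim G_\alpha$, tropical monomial $M$, and $H \in S$ (respectively $\tilde{S}$).

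Next, I would apply the degree-$d$ truncation $(-)_d$ to every entry of this chain. The key observation is that $(-)_d$ is compatible with tropical addition in the sense that $(\min(A,B))_d = \min(A_d, B_d)$, since for any monomial $\mathbf{u}$ of degree $d$ its coefficient in $\min(A,B)$ equals its coefficient in $\min(A_d, B_d)$. Moreover, a tropical monomial times a homogeneous polynomial is homogeneous, so $M + F_\alpha$ and $M + G_\alpha$ are both homogeneous of degree $\deg(M) + e_\alpha$. Their degree-$d$ parts are therefore either themselves (when $\deg(M) + e_\alpha = d$) or the tropical zero $\infty$, which is the identity for $\min$.

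Consequently, each atomic step contracts under $(-)_d$ to either the relation
\[
\min(M + F_\alpha, H_d) \sim \min(M + G_\alpha, H_d),
\]
which is still of generator type and hence lies in $J$, or to the trivial relation $H_d \sim H_d$. Concatenating the truncated steps produces a chain in $J$ from $F_d = (H_0)_d$ to $G_d = (H_{k+1})_d$, yielding $F_d \sim G_d \in J$. The only substantive point to verify is the distributivity of $(-)_d$ over $\min$ and tropical multiplication, both of which are immediate from unpacking definitions; no deeper obstacle arises.
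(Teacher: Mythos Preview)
Your proposal is correct and follows essentially the same argument as the paper: both use Lemma~\ref{lem:semimodule} to reduce $F \sim G$ to a chain of atomic steps of the form $\min(M+F_\alpha,H)\sim\min(M+G_\alpha,H)$ with homogeneous generators, then observe that taking degree-$d$ parts sends each step either to a relation of the same generator form (with $H$ replaced by $H_d$) or to a trivial relation, and concatenate.
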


\begin{proof}
Let $\mathcal J=\{ F_{\alpha} \sim G_{\alpha} : \alpha \in A \}$ be a
homogeneous generating set for $J$, and fix $F \sim G \in J$.  By
Lemma~\ref{lem:semimodule} there is a chain $F = F_0
\sim F_1 \sim \dots \sim F_s \sim F_{s+1} =G$ of relations in $J$ where $F_i \sim F_{i+1}$
has the form $M_i \ttimes H_i \tplus P_i \sim M_i \ttimes H'_{i} \tplus P_i$ with $H_i
\sim H'_{i} \in \mathcal J$ (or its reverse), $M_i$ a monomial in $S$, and $P_i \in
S$.  The $d$th graded piece of $F_i \sim F_{i+1}$ either has the form
$Q_i \sim Q_i$ or $M_i \ttimes H_i \tplus Q_i \sim M_i \ttimes H'_{i} \tplus Q_i$ where
$Q_i$ is homogeneous of degree $d$.  Thus $(F_i)_d \sim (F_{i+1})_d \in
J$, so $F_d \sim G_d \in J$.
\end{proof}

We will also need the notion of the homogenization of a congruence on
$S$.  This will play the same role as the homogenization of an ideal
in the usual Laurent polynomial ring with coefficients in $K$.
Geometrically, this is the tropical analogue of taking the projective
closure of a subvariety of $(K^*)^n$.

Recall that the homogenization of a polynomial $f = \sum c_\mathbf{u}
\mathbf{x}^\mathbf{u} \in K[x_1,\dots,x_n]$ is the polynomial $\tilde{f} :=
\sum c_\mathbf{u} \mathbf{x}^\mathbf{u} x_0^{\deg(f) - |\mathbf{u}|} \in K[x_0, \dots,x_n]$, where
$|\mathbf{u}| = u_1 + \dotsb + u_n$, and $\deg(f)$ is the maximum of
$|\mathbf{u}|$ for which $c_\mathbf{u} \neq 0$.  For an ideal $I \subset
K[x_1^{\pm 1},\dots,x_n^{\pm 1}]$, its homogenization is the ideal $I^h
= \langle \tilde{f} : f \in I \cap K[x_1,\dots,x_n] \rangle$.

\begin{definition}
Given $F = \bigtplus a_{\mathbf{u}} \ttimes  \mathbf{x}^{\mathbf{u}} \in S$,
we denote by $\deg(F)$ the maximum $\max(|\mathbf{u}| : a_{\mathbf{u}}
\neq \infty)$.  If $\supp(F) \subset \mathbb N^n$, we write
$\tilde{F}$ for the tropical polynomial in $\tilde{S}$ given by
\[\tilde{F} := \textstyle \bigtplus a_{\mathbf{u}} \ttimes \mathbf{x}^{\mathbf{u}} 
\ttimes x_0^{\deg(F)-|\mathbf{u}|}.\]
If $F \sim G$ is a relation,  where $F, G$ are tropical polynomials 
(with $\supp(F), \supp(G) \subset \mathbb N^n$) 
satisfying $\deg(F) \geq \deg(G)$,
its homogenization is
\[
\widetilde{F \sim G} \quad := \quad \tilde{F} \sim x_0^{\deg(F)-\deg(G)} \ttimes \tilde{G} .
\]
Let $J$ be a congruence on $S$.  The homogenization $J^h$ of
$J$ is the congruence 
\[
J^h := \langle \widetilde{ F \sim G} : F \sim G \in J \text{ and } \supp(F), \supp(G) \subset \mathbb N^n \rangle.
\]
\end{definition}

\begin{proposition} \label{p:homogenizecongruence}
Let $I$ be an ideal in $K[x_1^{\pm 1},\dots,x_n^{\pm 1}]$, and
let $I^h \subset K[x_0,\dots,x_n]$ be 
its homogenization.  
Then we have the equality of
congruences on $\tilde{S}$
\[\Trop(I^h) = \Trop(I)^h.\]
\end{proposition}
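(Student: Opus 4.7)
My plan is to prove the two inclusions $\Trop(I^h)\subseteq\Trop(I)^h$ and $\Trop(I)^h\subseteq\Trop(I^h)$ separately. In both directions I would work at the level of bend-relation generators, and Proposition~\ref{p:chaininsupport} will play a crucial role in controlling degrees.

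For $\Trop(I^h)\subseteq\Trop(I)^h$, I would show that every generating bend relation $\trop(g)\sim\trop(g)_{\hat{\mathbf w}}$ with $g\in I^h$ lies in $\Trop(I)^h$. Since $I^h$ is homogeneous, and Lemma~\ref{lem:semimodule} lets one recover the bend relation at $\mathbf w$ from the bend relation of the graded piece $g_{|\mathbf w|}\in I^h$ by taking a $\min$ with the rest, it suffices to treat $g$ homogeneous. Any nonzero homogeneous $g\in I^h$ of degree $d$ then factors uniquely as $g=x_0^k\tilde f$ where $f=g|_{x_0=1}\in I$, so $\trop(g)=kx_0+\widetilde{\trop f}$. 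A direct computation matching the two sides term by term shows that the bend relation of $\widetilde{\trop f}$ at $(\mathbf v,\deg f-|\mathbf v|)$ equals the homogenization of the bend relation of $\trop f$ at $\mathbf v$, so it lies in $\Trop(I)^h$; tropically multiplying by $x_0^k$ preserves membership.

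For $\Trop(I)^h\subseteq\Trop(I^h)$, I would take a generator $\widetilde{F\sim G}$ coming from $F\sim G\in\Trop(I)$ with $\supp F,\supp G\subseteq\mathbb N^n$ and, without loss of generality, $\deg F\geq\deg G$. Applying Proposition~\ref{p:chaininsupport} replaces $F\sim G$ by a chain $F=F_0\sim F_1\sim\cdots\sim F_{s+1}=G$ in which every step is a basic bend-type relation $\min(m+\trop g,H)\sim\min(m+\trop(g)_{\hat{\mathbf v}},H)$ and every $F_i$ has coefficients drawn from those of $F$ and $G$. In particular $\supp F_i\subseteq\supp F\cup\supp G\subseteq\mathbb N^n$, so $\deg F_i\leq d:=\deg F$ uniformly. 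Setting $F_i^{(d)}:=\tilde F_i+(d-\deg F_i)x_0\in\tilde S$, one checks $F^{(d)}=\tilde F$ and $G^{(d)}=\tilde G+(\deg F-\deg G)x_0$, so $F^{(d)}\sim G^{(d)}$ is exactly $\widetilde{F\sim G}$. A short computation shows that for each basic step, using $g^\star:=x_0^{d-\deg g}\tilde g\in I^h$ (and the fact that $\supp g\subseteq\supp F_i\subseteq\mathbb N^n$ forces $g\in I\cap K[x_1,\dots,x_n]$), one has $F_i^{(d)}=\min(m+\trop g^\star,H^{(d)})$ and $F_{i+1}^{(d)}=\min(m+\trop(g^\star)_{\widehat{(\mathbf v,d-|\mathbf v|)}},H^{(d)})$, which is itself a basic relation for $\Trop(I^h)$ in the sense of Lemma~\ref{lem:semimodule}. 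Transitivity along the chain then yields $\widetilde{F\sim G}\in\Trop(I^h)$.

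The main obstacle is precisely the degree bookkeeping in the second inclusion. Homogenization attaches different $x_0$-powers to tropical polynomials of different degrees, and a naive chain in $\Trop(I)$ could pass through intermediate $F_i$ of arbitrarily large degree; matching things up at some $d'>\deg F$ would only produce a shifted relation $\tilde F+(d'-\deg F)x_0\sim\tilde G+(d'-\deg G)x_0$, from which the ungraded $\widetilde{F\sim G}$ cannot be recovered, since semiring congruences do not permit cancellation of monomial factors. The support-containment conclusion of Proposition~\ref{p:chaininsupport} is exactly what rescues the argument, since it bounds $\deg F_i\leq\deg F$ uniformly so that the cap $d=\deg F$ suffices and the homogenized chain directly witnesses $\widetilde{F\sim G}\in\Trop(I^h)$.
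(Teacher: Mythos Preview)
Your proposal is correct and follows essentially the same approach as the paper's proof: both directions are handled at the level of bend-relation generators, and the crucial step in the inclusion $\Trop(I)^h\subseteq\Trop(I^h)$ is the use of Proposition~\ref{p:chaininsupport} to bound $\supp F_i\subseteq\supp F\cup\supp G$, hence $\deg F_i\le\max(\deg F,\deg G)$. The only organizational difference is that you pad every intermediate $F_i$ to the fixed degree $d=\deg F$ from the outset via $F_i^{(d)}=\tilde F_i+(d-\deg F_i)x_0$, whereas the paper homogenizes each step $F_i\sim F_{i+1}$ at its own degree and then aligns adjacent terms afterward by adding multiples of $x_0$; your uniform padding is a mild streamlining of the same idea.
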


\begin{proof}
Let $f$ be a homogeneous polynomial in $I^h$.  Write $g = f|_{x_0=1}$.  Note that $g \in I$, and  $f = x_0^a \tilde{g}$ for some $a \geq 0$.  For a
monomial $\mathbf{x}^{\mathbf{u}} \in K[x_0,\dots,x_n]$ write $\mathbf{u}'$ for the projection of
$\mathbf{u}$ onto the last $n$ coordinates.

Choose $\mathbf{u} \in \supp(f)$, and consider the relation $\trop(f) \sim
\trop(f)_{\hat{\mathbf{u}}} \in \Trop(I^h)$.  The homogenization of the relation
$\trop(g) \sim \trop(g)_{\hat{\mathbf{u}}'} \in \Trop(I)$ is 
\begin{equation*} 
\label{eqtn:homogenization}
\widetilde{\trop(g)} \sim x_0^{\deg(g)-\deg(g_{\hat{\mathbf{u}}'})} \ttimes
  \widetilde{\trop(g)}_{\hat{\mathbf{u}}'}. 
\end{equation*} 
Multiplying this relation by $x_0^a$
gives the relation $\trop(f) \sim \trop(f)_{\hat{\mathbf{u}}}$.  Since these relations generate
$\Trop(I^h)$, it follows that $\Trop(I^h) \subset
\Trop(I)^h$.

For the converse, it suffices to consider a relation of the form
$\widetilde{F \sim G}$ for $F \sim G \in \Trop(I)$ with both $F, G$ 
non-Laurent tropical polynomials, and show that it is a relation in
$\Trop(I^h)$.  By Proposition~\ref{p:chaininsupport} we can find a
chain $F =F_0 \sim F_1 \sim \dots \sim F_s \sim F_{s+1}=G$ with $F_i
\sim F_{i+1} \in \Trop(I)$ of the form $a_i \ttimes \trop(h_i) \tplus H_i \sim
a_i \ttimes \trop(h_i)_{\hat{\mathbf{v}}} \tplus H_i$ for some $h_i \in I$,
$a_i \in \mathbb R$, and $H_i \in S$, and for which the coefficient of
$\mathbf{u}$ in $F_i$ equals the coefficient of $\mathbf{u}$ in either
$F$ or $G$.  This latter condition implies that if $\mathbf{u} \not
\in \supp(F) \cup \supp(G)$ then $\mathbf{u} \not \in \supp(F_i)$, so
in particular each $F_i$ 
has support in $\mathbb N^n$ 
and $\deg(F_i) \leq \max(\deg(F),\deg(G))$. 
The homogenization of the relation
$a_i \ttimes \trop(h_i) \tplus H_i  \sim
a_i \ttimes \trop(h_i)_{\hat{\mathbf{v}}} \tplus H_i$ equals 
\[a_i \ttimes  \trop(\tilde{h}_i) \ttimes  x_0^b \tplus  \tilde{H_i} \ttimes  x_0^d \sim 
a_i \ttimes \trop(\tilde{h}_i)_{\hat{\mathbf{v}}'} \ttimes  x_0^b \tplus  \tilde{H_i} \ttimes x_0^d,
\] 
where $\mathbf{v}' \in \mathbb N^{n+1}$ has last $n$ coordinates equal to
$\mathbf{v}$,
and the numbers $b,d$ satisfy
$b = \max(\deg(H_i) - \deg(h_i), 0)$ and
$d = \max(\deg(h_i) - \deg(H_i), 0)$. 
Since $\tilde{h}_i \in I^h$, we have $\trop(\tilde{h}_i)
\sim \trop(\tilde{h}_i)_{\mathbf{v}'} \in \Trop(I^h)$, and so
$\widetilde{F_i \sim F_{i+1}} \in \Trop(I^h)$.

Each relation $\widetilde{F_i \sim F_{i+1}}$ is homogeneous of degree
at most $\max(\deg(F),\deg(G))$. The right-hand side of
$\widetilde{F_{i-1} \sim F_i}$ and the left-hand side of
$\widetilde{F_{i} \sim F_{i+1}}$ are either identical or differ by a
factor of $x_0^b$, with $b \in \mathbb N$ equal to the difference
between their degrees.  Thus we can multiply both
sides of the lower degree relation by $x_0^b$ to get two relations whose adjacent
terms coincide.  Doing this for the string $\widetilde{F_0 \sim F_1},
\dots, \widetilde{F_s \sim F_{s+1}}$ gives a chain of relations in
$\Trop(I^h)$ of the same degree, whose first entry is $x_0^a \ttimes 
\widetilde{F}$ and whose last entry is $x_0^b \ttimes \widetilde{G}$ for some
$a, b \in \mathbb N$ with at most one of $a$ and $b$ nonzero.  Taking
the transitive closure we get $x_0^a \ttimes \widetilde{F} \sim x_0^b \ttimes
\widetilde{G} \in \Trop(I^h)$.  This relation equals $\widetilde{F
  \sim G}$, which completes the proof.
\end{proof}

Note that the use of Proposition~\ref{p:chaininsupport}
was key in the proof of Proposition
\ref{p:homogenizecongruence}.

We are now in position to prove the equivalence
(\ref{item:congruence}) $\Leftrightarrow$ (\ref{item:ideal}) of
Theorem~\ref{t:equivalent} from the introduction.

\begin{proof}[Proof of (\ref{item:congruence}) $\Leftrightarrow$ (\ref{item:ideal}) of Theorem~\ref{t:equivalent}]
We first show that the ideal $\trop(I)$ determines the congruence
$\Trop(I)$. 
Specifically, we will show that 
\[ \Trop(I) = \langle F \sim F_{\hat{\mathbf{u}}} : F \in \trop(I), \mathbf{u} \in \supp(F) \rangle.\]
The inclusion $\subseteq$ follows from the fact that $\Trop(I)$ is
generated by relations of the form $\trop(f) \sim
\trop(f)_{\hat{\mathbf{u}}}$ for $f \in I$, which have the form $F
\sim F_{\hat{\mathbf{u}}}$ for $F \in \trop(I)$.
To prove the reverse inclusion,  note that any $F \in \trop(I)$ has the form $\bigtplus_{1
  \leq i \leq s} a_i \ttimes \trop(f_i)$ for some $f_1,\dots,f_s \in I$
and $a_i \in \mathbb R$. If $\mathbf{u} \in \supp(F)$, the polynomial $F_{\hat{\mathbf{u}}}$ is
 $\bigtplus_{1 \leq i \leq s} a_i \ttimes \trop(f_i)_{\hat{\mathbf{u}}}$, where we set
$\trop(f_i)_{\hat{\mathbf{u}}} = \trop(f_i)$ if $\mathbf{u} \not \in
\supp(f_i)$.  Thus $F \sim F_{\hat{\mathbf{u}}}$ equals the 
tropical sum of
$a_i \ttimes \trop(f_i) \sim a_i \ttimes  \trop(f_i)_{\hat{\mathbf{u}}}$ for $1
\leq i \leq s$, and so it lies in $\Trop(I)$. 

To show that the congruence
$\Trop(I)$ determines the ideal $\trop(I)$, 
first note that by Proposition~\ref{p:homogenizecongruence} the
congruence $\Trop(I)$ determines the congruence $\Trop(I^h)$ on
$\tilde{S}$, where $I^h \subset K[x_0,\dots,x_n]$ is the
homogenization of the ideal $I$. 
For any $d \geq 0$, denote by $\trop(I^h)_d$ the degree $d$ part 
of the homogeneous ideal $\trop(I^h) \subset \tilde{S}$.
We may regard any homogeneous tropical polynomial $F$ of degree $d$ as a
tropical linear form $l_F$ on the space $\Rbar^{\binom{n+d}{d}}$, whose coordinates
are indexed by the monomials of degree $d$. Under this identification
the set $\trop(I^h)_d$ is a tropical linear space; in fact,
$\trop(I^h)_d$ is equal to the tropicalization $\trop(I^h_d)$
of the degree-$d$ part $I^h_d$ of $I^h$. Let $\ell_d$ be the
tropical linear space in $\Rbar^{\binom{n+d}{d}}$ on which the tropical linear forms
$l_F$ vanish for all $F \in \trop(I^h)_d$, i.e.,
$\ell_d$ is the set of $\mathbf{z} \in
\mathbb \Rbar^{\binom{n +d}{d}}$ where the minimum in 
$l_F(\mathbf{z})$ is achieved twice for all $F \in \trop(I^h)_d$.
We have that $\ell_d$ is the dual tropical linear space $(\trop(I^h)_d)^{\perp}$.
The collection of tropical linear forms $l_G$ that vanish on $\ell_d$
is $\ell_d^{\perp} = (\trop(I^h)_d)^{\perp\perp}$, which is equal to 
$\trop(I^h)_d$ \cite{RinconIsotropic}*{Corollary 6.15}.  
Note that if $G$ is a homogeneous tropical polynomial of degree $d$, 
the tropical linear form $l_G$ vanishes on $\ell_d$ if and only if
$l_G(\mathbf{z}) =  l_{G_{\hat{\mathbf{u}}}}(\mathbf{z})$ for all 
$\mathbf{z} \in \ell_d$ and $\mathbf{u} \in \supp(G)$. 
It follows that if $G \in \tilde{S}_d$ is such that its bend relations
$B(G)$ are in $\Trop(I^h)_d$, then $G \in \ell_d^{\perp} = \trop(I^h)_d$.
We therefore have
\[
 \trop(I^h)_d = \{ G \in \tilde{S}_d : B(G) \subset \Trop(I^h)_d \},
\]
which shows that $\Trop(I^h)$ determines $\trop(I^h)_d$ 
for all $d \geq 0$, and thus $\trop(I^h)$. Since
$\trop(I)$ is the ideal in $S$ generated by $\{ F|_{x_0=0} : F \in \trop(I^h) \}$,
it also determines $\trop(I)$.
\end{proof}

\begin{remark}
The tropical linear spaces $\ell_d$ also encode the valuated matroids of the vector spaces $I^h_d$, 
so we can see the third equivalence of Theorem~\ref{t:equivalent} 
from the previous argument as well. This is elaborated on in
Section~\ref{s:matroids}.
\end{remark}

\section{Multiplicities}
\label{s:multiplicity}
In this section we prove Theorem~\ref{t:mainthm}.  The strategy is to
define a Gr\"obner theory for congruences on the semiring of tropical
polynomials, which lets us determine the multiplicities from
the tropical scheme.

We first recall the definition of multiplicity for maximal cells of a
tropical variety.  For an irreducible $d$-dimensional subvariety $Y
\subset (K^*)^n$ the tropical variety $\trop(Y) \subset \mathbb R^n$
is the support of a pure $d$-dimensional $\Gamma$-rational polyhedral
complex.  This
means that $\trop(Y)$ is the union of a set $\Sigma$ of
$d$-dimensional polyhedra of the form $\{\mathbf{w} \in \mathbb R^n : A \mathbf{w}
\leq \mathbf{b} \}$ where $A \in \mathbb Q^{r \times n}$ and $\mathbf{b} \in \Gamma^r$
for some $r \in \mathbb{N}$, and these polyhedra
intersect only along faces.  See \cite{TropicalBook}*{Chapter 3} for
more details.

 Let $I \subset K[x_1^{\pm 1},\dots,x_n^{\pm 1}]$ be the ideal of $Y$.
 Fix a group homomorphism $\Gamma \rightarrow K^*$, which we write $w
 \mapsto t^w$, satisfying $\val(t^w)=w$.  This may require replacing
 $K$ by an extension field; see \cite{TropicalBook}*{Chapter 2}.  For
 $a$ in the valuation ring $R$ we write $\overline{a}$ for its image
 in the residue field $\K$.  Fix $\mathbf{w}$ in the relative interior of a
 $d$-dimensional polyhedron $\sigma \in \Sigma$.  We denote by
 $\inn_{\mathbf{w}}(I) \subset \K[x_1^{\pm 1},\dots,x_n^{\pm 1}]$ the
 initial ideal of $I$ with respect to $\mathbf{w}$, in the sense
 described in \cite{TropicalBook}*{Section 2.4}.  This is the ideal
 $\inn_{\mathbf{w}}(I) := \langle \inn_{\mathbf{w}}(f) : f \in I
 \rangle,$ where for $f = \sum c_{\mathbf{u}} \mathbf{x}^{\mathbf{u}}$
 the initial form $\inn_{\mathbf{w}}(f)$ equals
 $\sum_{\val(c_{\mathbf{u}}) + \mathbf{w} \cdot \mathbf{u} = \gamma}
 \overline{t^{-\val(c_{\mathbf{u}})} c_{\mathbf{u}}}\,
 \mathbf{x}^{\mathbf{u}}$, with $\gamma = \min ( \val(c_{\mathbf{u}})
 + \mathbf{w} \cdot \mathbf{u} ) = \trop(f)(\mathbf{w})$.

The
 multiplicity of $\mathbf{w}$ is the multiplicity of the initial ideal
 $\inn_{\mathbf{w}}(I)$:
\[\mult(\mathbf{w}) := \sum_{P} \mult(P,\inn_{\mathbf{w}}(I)),\] where
 the sum is over the minimal associated primes of
 $\inn_{\mathbf{w}}(I)$, and $\mult(P,\inn_{\mathbf{w}}(I))$ is the
 multiplicity of the associated primary component.
 See \cite{TropicalBook}*{Section 3.4} for more details.  If coordinates
 on the torus $(K^*)^n$ have been chosen so that
 $\inn_{\mathbf{w}}(I)$ has a generating set involving only the
 variables $x_{d+1},\dots,x_n$, then
 \[\mult(\mathbf{w}) = \dim_{\K}\bigl( \, \K[x_{d+1}^{\pm 1},\dots,x_n^{\pm
       1}]/(\inn_{\mathbf{w}}(I) \cap \K[x_{d+1}^{\pm
       1},\dots,x_n^{\pm 1}])\, \bigr)\] 
 (see \cite{TropicalBook}*{Lemma 3.44}).

We now extend the definition of initial ideals to congruences on
$\tilde{S}$ and $S$.  

\begin{definition} \label{d:initialcongruence}
Let $F = \bigtplus a_{\mathbf{u}} \ttimes  \mathbf{x}^{\mathbf{u}} \in \tilde{S}$ and $\mathbf{w} \in \mathbb R^{n+1}$.  
  The {\em initial form} of $F$ with respect to $\mathbf{w}$ is 
the tropical polynomial in $\mathbb B[x_0,\dots,x_n]$
\[
\inn_{\mathbf{w}}(F) := \bigtplus_{a_{\mathbf{u}} + \mathbf{w} \cdot \mathbf{u} =F(\mathbf{w})}
\mathbf{x}^{\mathbf{u}}.
\] 
For $G = \bigtplus b_{\mathbf{v}} \ttimes \mathbf{x}^{\mathbf{v}} \in \tilde{S}$, let $\gamma =
\min(F(\mathbf{w}),G(\mathbf{w}))$.  The initial form of the relation
$F \sim G$ with respect to $\mathbf{w}$ is the relation
\[
\inn_{\mathbf{w}}(F \sim G) \quad := \quad \bigtplus_{a_{\mathbf{u}} + \mathbf{w} \cdot \mathbf{u} =\gamma}
\mathbf{x}^{\mathbf{u}} \sim \bigtplus_{b_{\mathbf{v}}+\mathbf{w} \cdot \mathbf{v} =\gamma}
\mathbf{x}^{\mathbf{v}}.
\]
 Note that if
$F(\mathbf{w})=G(\mathbf{w})$ then this is $\inn_{\mathbf{w}}(F) \sim
  \inn_{\mathbf{w}}(G)$, but if $F(\mathbf{w}) < G(\mathbf{w})$ then
  this is $\inn_{\mathbf{w}}(F) \sim \infty$.

For a congruence $J$ on $\tilde{S}$, the {\em initial congruence} of
$J$ with respect to $\mathbf{w}$ is the congruence on $\mathbb B[x_0,\dots,x_n]$ 
\[\inn_{\mathbf{w}}(J) := \langle \inn_{\mathbf{w}}(F \sim G) : F \sim G \in J \rangle.\]

The initial form with respect to $\mathbf{w} \in \mathbb R^n$ of a
relation between tropical Laurent polynomials
 and the initial congruence of a
congruence on $S$ are defined analogously.  
\end{definition}

\begin{example}
Consider $S = \Rbar[x^{\pm 1}, y^{\pm 1}, z^{\pm 1}]$, and let $F =
0 \ttimes x \tplus 1\ttimes y \tplus 2 \ttimes z \in S$.  For $\mathbf{u} 
=(1,0,0)$ we have the relation $F \sim F_{\hat{\mathbf{u}}}$, which is
$0 \ttimes x \tplus 1 \ttimes y \tplus 2 \ttimes z \sim 1\ttimes y \tplus 2 \ttimes z$. If $\mathbf{w} = (2,1,3)$, the initial form
$\inn_{\mathbf{w}}(F \sim F_{\hat{\mathbf{u}}})$ of this relation is $x \tplus y
\sim y$.
For $\mathbf{w}=(1,2,2)$ the initial form is $x \sim \infty$.
\end{example}

As in standard Gr\"obner theory, the initial congruence of a congruence
generated by $\{F_{\alpha} \sim G_{\alpha}\}_{\alpha \in A}$ for some
set $A$ is not necessarily generated by
$\{\inn_{\mathbf{w}}(F_{\alpha} \sim G_{\alpha})\}_{\alpha \in A}$.
For example, for $\mathbf{w} = (0,1,2)$ and the congruence $J$ on
$\Rbar[x,y,z]$ generated by $\{ x \sim y, x \sim z \}$, we have $y
\sim z \in J$, so $y \sim \infty \in \inn_{\mathbf{w}}(J)$. However, the
initial form of both $x \sim y$ and $x \sim z$ is $x \sim \infty$, and
$y \sim \infty \not \in \langle x \sim \infty \rangle$.

Definition~\ref{d:initialcongruence} is designed to commute with
tropicalization of polynomials, as the following lemma shows.

\begin{lemma} \label{l:initialformtrop}
For  $f \in K[x_1^{\pm 1},\dots,x_n^{\pm 1}]$ and
$\mathbf{w} \in \mathbb R^{n}$ we have
\[\inn_{\mathbf{w}}(\trop(f)) = \trop(\inn_{\mathbf{w}}(f)).\]
The same holds for $f \in K[x_0,\dots,x_n]$ and $\mathbf{w} \in
\mathbb R^{n+1}$.
\end{lemma}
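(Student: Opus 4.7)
The plan is to verify the equality by unwinding both sides and checking they agree term-by-term. Write $f = \sum c_{\mathbf{u}} \mathbf{x}^{\mathbf{u}}$ and set $\gamma := \trop(f)(\mathbf{w}) = \min\bigl(\val(c_{\mathbf{u}}) + \mathbf{w} \cdot \mathbf{u}\bigr)$. Let $T$ denote the set of exponents $\mathbf{u}$ with $c_{\mathbf{u}} \neq 0$ and $\val(c_{\mathbf{u}}) + \mathbf{w} \cdot \mathbf{u} = \gamma$. By Definition~\ref{d:initialcongruence} applied to $\trop(f) = \min(\val(c_{\mathbf{u}}) + \mathbf{x} \cdot \mathbf{u})$, we have directly
\[
\inn_{\mathbf{w}}(\trop(f)) \;=\; \min_{\mathbf{u} \in T}(\mathbf{x} \cdot \mathbf{u}).
\]

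For the right-hand side, recall $\inn_{\mathbf{w}}(f) = \sum_{\mathbf{u} \in T} \overline{t^{-\val(c_{\mathbf{u}})} c_{\mathbf{u}}}\, \mathbf{x}^{\mathbf{u}} \in \K[x_1^{\pm 1}, \dots, x_n^{\pm 1}]$. The key observation is that each scalar $t^{-\val(c_{\mathbf{u}})} c_{\mathbf{u}}$ has valuation zero, hence lies in the valuation ring but not in its maximal ideal, so its image $\overline{t^{-\val(c_{\mathbf{u}})} c_{\mathbf{u}}} \in \K$ is a nonzero element of the residue field. In particular $\supp(\inn_{\mathbf{w}}(f)) = T$. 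Since $\K$ carries the trivial valuation, every nonzero coefficient has tropical value $0$, so
\[
\trop(\inn_{\mathbf{w}}(f)) \;=\; \min_{\mathbf{u} \in T}\bigl(0 + \mathbf{x} \cdot \mathbf{u}\bigr) \;=\; \min_{\mathbf{u} \in T}(\mathbf{x} \cdot \mathbf{u}),
\]
which matches the expression for $\inn_{\mathbf{w}}(\trop(f))$ above. This proves the first claim.

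For the homogeneous case, the argument is verbatim the same: replacing the exponent set $\mathbb Z^n$ by $\mathbb N^{n+1}$ and $\mathbf{w} \in \mathbb R^n$ by $\mathbf{w} \in \mathbb R^{n+1}$ does not alter any step of the computation. There is no real obstacle here; the statement is purely definitional, and the only substantive input is the standard fact that the residue map $R \to \K$ sends units to nonzero elements, which ensures that taking initial forms does not cancel any monomials whose tropicalization achieves the minimum.
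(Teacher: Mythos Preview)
Your proof is correct and follows essentially the same approach as the paper: both unwind the definitions, set $\gamma = \trop(f)(\mathbf{w})$, and verify that each side equals $\min_{\mathbf{u} \in T}(\mathbf{x}\cdot\mathbf{u})$ where $T$ is the set of exponents achieving the minimum. You are slightly more explicit than the paper in spelling out why the residues $\overline{t^{-\val(c_{\mathbf{u}})}c_{\mathbf{u}}}$ are nonzero, but otherwise the arguments are identical.
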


\begin{proof}
Suppose $f = \sum c_\mathbf{u} \mathbf{x}^{\mathbf{u}}$ with
$c_{\mathbf{u}} \in K$, so $\trop(f) = \bigtplus \val(c_{\mathbf{u}}) \ttimes
\mathbf{x}^{\mathbf{u}}$.  Let $\gamma = \trop(f)(\mathbf{w})$.
By definition, $\inn_{\mathbf{w}}(f) =
\sum_{\val(c_{\mathbf{u}})+\mathbf{w} \cdot \mathbf{u} = \gamma}
\overline{t^{-\val(c_{\mathbf{u}})}c_{\mathbf{u}}}\mathbf{x}^{\mathbf{u}}
\in \K[x_1^{\pm 1},\dots,x_n^{\pm 1}]$.  Thus $\trop(\inn_{\mathbf{w}}(f)) =
\bigtplus_{\val(c_{\mathbf{u}})+\mathbf{w} \cdot \mathbf{u} = \gamma} 
\mathbf{x}^{\mathbf{u}} = \inn_{\mathbf{w}}(\trop(f))$, as
claimed.
\end{proof}

The first key result of this section is the following, which says that taking
congruences commutes with taking initial ideals.

\begin{proposition} \label{p:inicommuteswithtrop}
Let $I$ be an ideal in $K[x_1^{\pm 1},\dots,x_n^{\pm 1}]$.  Then for
$\mathbf{w} \in \mathbb R^n$ we have 
\[\inn_{\mathbf{w}}(\Trop(I)) =
\Trop( \inn_{\mathbf{w}}(I)).\]
\end{proposition}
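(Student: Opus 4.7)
I would prove the equality by establishing containment in both directions. For the containment $\Trop(\inn_{\mathbf{w}}(I)) \subseteq \inn_{\mathbf{w}}(\Trop(I))$, it is enough to realize each generating bend relation $\trop(\bar{g}) \sim \trop(\bar{g})_{\hat{\mathbf{v}}}$ of the left-hand side (for $\bar{g} \in \inn_{\mathbf{w}}(I)$ and $\mathbf{v} \in \supp(\bar{g})$) as the initial form of a relation in $\Trop(I)$. By the standard Gr\"obner-style lifting of initial ideals (\cite[Chapter 2]{TropicalBook}), there exists $g \in I$ with $\inn_{\mathbf{w}}(g) = \bar{g}$; then Lemma~\ref{l:initialformtrop} gives $\inn_{\mathbf{w}}(\trop(g)) = \trop(\bar{g})$, and a direct computation splitting on whether $\mathbf{v}$ is or is not the unique minimizer of $\trop(g)(\mathbf{w})$ shows that the initial form of $\trop(g) \sim \trop(g)_{\hat{\mathbf{v}}} \in \Trop(I)$ is exactly $\trop(\bar{g}) \sim \trop(\bar{g})_{\hat{\mathbf{v}}}$.

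For the reverse containment $\inn_{\mathbf{w}}(\Trop(I)) \subseteq \Trop(\inn_{\mathbf{w}}(I))$, fix $F \sim G \in \Trop(I)$ and set $\gamma := \min(F(\mathbf{w}), G(\mathbf{w}))$. For any tropical Laurent polynomial $P = \min(a_{\mathbf{u}} + \mathbf{x} \cdot \mathbf{u})$, write $P|_\gamma := \min_{a_{\mathbf{u}} + \mathbf{w} \cdot \mathbf{u} = \gamma}(\mathbf{x} \cdot \mathbf{u})$ for the truncation to monomials whose value at $\mathbf{w}$ equals exactly $\gamma$, so that by definition $\inn_{\mathbf{w}}(F \sim G) = F|_\gamma \sim G|_\gamma$. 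The plan is to apply Proposition~\ref{p:chaininsupport} to produce a chain $F = F_0 \sim F_1 \sim \cdots \sim F_{s+1} = G$ in $\Trop(I)$ of the explicit shape $\min(m_j + \trop(g_j), H_j) \sim \min(m_j + \trop(g_j)_{\hat{\mathbf{v}}_j}, H_j)$ (or its reverse) with $g_j \in I$, where the coefficient-bound property (b) of that proposition immediately forces $F_j(\mathbf{w}) \geq \gamma$ for all $j$.

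The technical heart of the argument is to show that each $F_j|_\gamma \sim F_{j+1}|_\gamma$ already lives in $\Trop(\inn_{\mathbf{w}}(I))$; transitivity will then deliver $F|_\gamma \sim G|_\gamma \in \Trop(\inn_{\mathbf{w}}(I))$. Because $F_j(\mathbf{w}) \geq \gamma$ forces both $(m_j + \trop(g_j))(\mathbf{w}) \geq \gamma$ and $H_j(\mathbf{w}) \geq \gamma$, the truncation distributes over the tropical minimum inside $F_j$, giving $F_j|_\gamma = \min(\trop(g_j)|_{\gamma - m_j}, H_j|_\gamma)$ and analogously for $F_{j+1}$. Setting $\delta_j := \gamma - m_j \leq \trop(g_j)(\mathbf{w})$, the case $\delta_j < \trop(g_j)(\mathbf{w})$ forces both truncated $\trop(g_j)$-pieces to be $\infty$ and the relation to be trivial. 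In the remaining case $\delta_j = \trop(g_j)(\mathbf{w})$, Lemma~\ref{l:initialformtrop} identifies $\trop(g_j)|_{\delta_j}$ with $\trop(\inn_{\mathbf{w}}(g_j))$, so $F_j|_\gamma \sim F_{j+1}|_\gamma$ becomes either trivial or a relation of the form $\min(\trop(\inn_{\mathbf{w}}(g_j)), H_j|_\gamma) \sim \min(\trop(\inn_{\mathbf{w}}(g_j))_{\hat{\mathbf{v}}_j}, H_j|_\gamma)$, and this latter relation lies in $\Trop(\inn_{\mathbf{w}}(I))$ by Lemma~\ref{lem:semimodule}, since $\inn_{\mathbf{w}}(g_j) \in \inn_{\mathbf{w}}(I)$.

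The main obstacle I anticipate is the careful bookkeeping of the truncation operation $P \mapsto P|_\gamma$; the crux is verifying that it genuinely distributes over the tropical minima inside the chain steps. This is precisely where the coefficient-control of Proposition~\ref{p:chaininsupport} is essential, since it ensures that both pieces of each $F_j$ take value $\geq \gamma$ at $\mathbf{w}$ and thereby rules out the otherwise-delicate configuration in which one piece of $F_j$ dips below $\gamma$ while the other passes through it, which would destroy the distributivity.
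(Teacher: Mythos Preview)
Your reverse containment $\inn_{\mathbf{w}}(\Trop(I)) \subseteq \Trop(\inn_{\mathbf{w}}(I))$ is essentially the paper's argument: both apply Proposition~\ref{p:chaininsupport}, use its coefficient condition (b) to force $F_j(\mathbf{w}) \geq \gamma$ along the chain, truncate each $F_j$ at level $\gamma$, and then verify step by step via Lemma~\ref{l:initialformtrop} that the truncated relations lie in $\Trop(\inn_{\mathbf{w}}(I))$. Your case split on $\delta_j$ versus $\trop(g_j)(\mathbf{w})$ is a slightly different bookkeeping of the same case analysis the paper carries out.

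The forward containment, however, has a genuine gap. You assert that every $\bar g\in\inn_{\mathbf{w}}(I)$ lifts to some $g\in I$ with $\inn_{\mathbf{w}}(g)=\bar g$, citing standard Gr\"obner lifting. That lifting statement requires $\mathbf{w}\in\Gamma^n$ and fails for general $\mathbf{w}\in\mathbb R^n$. For example, take $K=\mathbb C((t))$ (so $\Gamma=\mathbb Z$), $I=\langle x+y,\,z+w\rangle$, and $\mathbf{w}=(0,0,\pi,\pi)$. Then $\bar g=x+y+z+w\in\inn_{\mathbf{w}}(I)$, but no $f\in I$ satisfies $\inn_{\mathbf{w}}(f)=\bar g$: if both $x$ and $z$ achieved $\trop(f)(\mathbf{w})$, the valuations of their coefficients in $f$ would have to differ by $\pi\notin\Gamma$.

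The paper avoids this by never attempting a global lift. It writes $\bar g=\sum_i\inn_{\mathbf{w}}(f_i)$ and then merges any two summands whose initial forms share a monomial (sharing a monomial forces the needed valuation alignment into $\Gamma$, so that merge is always possible), arriving at a decomposition with pairwise disjoint $\supp(\inn_{\mathbf{w}}(f_i))$. Then $\trop(\bar g)=\min_i\trop(\inn_{\mathbf{w}}(f_i))$, so the bend relation $\trop(\bar g)\sim\trop(\bar g)_{\hat{\mathbf v}}$ with $\mathbf v\in\supp(\inn_{\mathbf{w}}(f_1))$ is $\min(\trop(\inn_{\mathbf{w}}(f_1)),H)\sim\min(\trop(\inn_{\mathbf{w}}(f_1))_{\hat{\mathbf v}},H)$. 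This lies in $\inn_{\mathbf{w}}(\Trop(I))$ because $\trop(\inn_{\mathbf{w}}(f_1))\sim\trop(\inn_{\mathbf{w}}(f_1))_{\hat{\mathbf v}}$ \emph{is} the initial form of the relation $\trop(f_1)\sim\trop(f_1)_{\hat{\mathbf v}}\in\Trop(I)$, and Lemma~\ref{lem:semimodule} then supplies the extra $H$. Your argument can be repaired by inserting exactly this decomposition step in place of the global lift.
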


\begin{proof}
Fix $\mathbf{w} \in \mathbb R^n$.  The congruence
$\Trop(\inn_{\mathbf{w}}(I))$ is generated by relations of the form
$\trop(g) \sim \trop(g)_{\hat{\mathbf{v}}}$ for $g \in
\inn_{\mathbf{w}}(I)$ and $\mathbf{v} \in \supp(g)$.  We first note
that we can write $g = \sum \inn_{\mathbf{w}}(f_i)$ for some $f_i \in
I$ with $\supp(\inn_{\mathbf{w}}(f_i)) \cap
\supp(\inn_{\mathbf{w}}(f_j)) =\emptyset$ if $i \neq j$.  Indeed, if
$g = \sum a_i \mathbf{x}^{\mathbf{u}_i} \inn_{\mathbf{w}}(f_i)$ for
$a_i \in \K$ and $f_i \in I$, then for $c_i \in R$ with
$\overline{c}_i= a_i$ we have $g = \sum \inn_{\mathbf{w}}(c_i
\mathbf{x}^{\mathbf{u}_i}f_i)$, so we may assume that $\mathbf{u}_i =
\mathbf{0}$ and $a_i=1$.  If the minimum in both
$\trop(f_i)(\mathbf{w})$ and $\trop(f_j)(\mathbf{w})$ is achieved at
the term involving $\mathbf{u}$, where the coefficient of
$\mathbf{x}^{\mathbf{u}}$ in $f_i$ is $c$ and the coefficient in $f_j$
is $d$, then $\gamma :=\trop(f_j)(\mathbf{w}) - \trop(f_i)(\mathbf{w})
= \val(d) - \val(c) \in \Gamma$, and we can find $\alpha \in K$ with
$\val(\alpha) = \val(d)-\val(c)$ and $\overline{\alpha
  t^{-\val(\alpha)}} = 1$.  We then have $h= f_j+\alpha f_i \in I$,
and $\inn_{\mathbf{w}}(h) =
\inn_{\mathbf{w}}(f_i)+\inn_{\mathbf{w}}(f_j)$.  We may thus replace
$f_i, f_j$ by $h$, and repeat this procedure until the supports of the
$\inn_{\mathbf{w}}(f_i)$ are disjoint.  Note that this implies that
$\trop(g) = \bigtplus \trop(\inn_{\mathbf{w}}(f_i))$.  

Now, for $\mathbf{v} \in \supp(\inn_{\mathbf{w}}(f_1))$ we can write
$H = \bigtplus_{i=2}^s(\trop(\inn_{\mathbf{w}}(f_i)))$, so $\trop(g) \sim
\trop(g)_{\hat{\mathbf{v}}}$ is equal to
$\trop(\inn_{\mathbf{w}}(f_1)) \tplus H \sim
\trop(\inn_{\mathbf{w}}(f_1))_{\hat{\mathbf{v}}} \tplus H$. This shows
that $\Trop(\inn_{\mathbf{w}}(I))$ is generated by relations of the
form $\trop(\inn_{\mathbf{w}}(f)) \sim
\trop(\inn_{\mathbf{w}}(f))_{\hat{\mathbf{v}}}$.  Since
$\min(\trop(f)(\mathbf{w}), \trop(f)_{\hat{\mathbf{v}}}(\mathbf{w})) =
\trop(f)(\mathbf{w})$, we have that $\inn_{\mathbf{w}}(\trop(f) \sim
\trop(f)_{\hat{\mathbf{v}}})$ equals $\inn_{\mathbf{w}}(\trop(f)) \sim
\inn_{\mathbf{w}}(\trop(f))_{\hat{\mathbf{v}}}$, so by Lemma
\ref{l:initialformtrop}, $\inn_{\mathbf{w}}(\trop(f) \sim
\trop(f)_{\hat{\mathbf{v}}})$ is equal to $\trop(\inn_{\mathbf{w}}(f))
\sim \trop(\inn_{\mathbf{w}}(f))_{\hat{\mathbf{v}}}$.  Note that the
term $\trop(\inn_{\mathbf{w}}(f))_{\hat{\mathbf{v}}}$ may equal
$\infty$.  This proves the containment $\inn_{\mathbf{w}}(\Trop(I))
\supseteq \Trop( \inn_{\mathbf{w}}(I))$.

For the reverse inclusion, let $(F' \sim G') = \inn_{\mathbf{w}}( F
\sim G)$ be a generator of the congruence
$\inn_{\mathbf{w}}(\Trop(I))$, where $F \sim G \, \in \Trop(I)$.  Fix
a chain $F = F_0 \sim F_1\sim \dots \sim F_s \sim F_{s+1}=G$ in
$\Trop(I)$ with $F_i= m_i \ttimes \trop(g_i) \tplus H_i$ and
$F_{i+1}=m_{i} \ttimes \trop(g_i)_{\hat{\mathbf{v}}} \tplus H_i$ (or
the reverse), satisfying the conditions of
Proposition~\ref{p:chaininsupport}. In particular, we have $\gamma :=
\min( F(\mathbf{w}), G(\mathbf{w})) \leq F_i(\mathbf{w})$ for all $i$.
For any $F_i = \bigtplus a_\mathbf{u} \ttimes \mathbf{x}^{\mathbf{u}}$ in this
chain, define $F'_i := \bigtplus_{a_\mathbf{u} + \mathbf{x} \cdot
  {\mathbf{u}} = \gamma} \mathbf{x}^{\mathbf{u}}$. Note that $F'_i$
might be equal to $\infty$.  We claim that the chain $F' = F'_0 \sim
F'_1\sim \dots \sim F'_s \sim F'_{s+1}=G'$ is a chain of relations in
$\Trop(\inn_{\mathbf{w}}(I))$.  It follows that $F' \sim G' \, \in
\Trop(\inn_{\mathbf{w}}(I))$, completing the proof.

To prove the claim, consider first the case where $F_i(\mathbf{w}) =
F_{i+1}(\mathbf{w}) = \gamma$ for some $i$. If $m_i + \trop(g_i)(\mathbf{w}) > H_i(\mathbf{w}) = \gamma$ then $(F'_i \sim
F'_{i+1}) = (\inn_{\mathbf{w}}(H_i) \sim \inn_{\mathbf{w}}(H_i))$.  If
$m_i+\trop(g_i)(\mathbf{w}) = H_i(\mathbf{w}) = \gamma$ then $F'_i
\sim F'_{i+1}$ equals 
\[\inn_{\mathbf{w}}(\trop(g_i)) \tplus  \inn_{\mathbf{w}}(H_i) \sim
\inn_{\mathbf{w}}(\trop(g_i))_{\hat{\mathbf{u}}} \tplus \inn_{\mathbf{w}}(H_i),\]
where we note that $\inn_{\mathbf{w}}(\trop(g_i))_{\hat{\mathbf{u}}}$
may equal $\infty$.  If $\gamma = m_i +\trop(g_i)(\mathbf{w}) <
H_i(\mathbf{w})$ then $F'_i \sim F'_{i+1}$ is equal to
$\inn_{\mathbf{w}}(\trop(g_i)) \sim
\inn_{\mathbf{w}}(\trop(g_i))_{\hat{\mathbf{u}}}$.  In all cases,
Lemma~\ref{l:initialformtrop} ensures that the relation $F'_i \sim
F'_{i+1}$ is in $\Trop(\inn_{\mathbf{w}}(I))$.  Now, suppose that
$F_i(\mathbf{w}) < F_{i+1}(\mathbf{w})$.  If $\gamma =
F_i(\mathbf{w})$ then $\trop(g_i)(\mathbf{w})<H_i(\mathbf{w})$ and
$\inn_{\mathbf{w}}(g_i)$ is a monomial. This means that $(F'_i \sim
F'_{i+1}) = (\trop(\inn_{\mathbf{w}}(g_i)) \sim \infty) \in
\Trop(\inn_{\mathbf{w}}(I))$.  Finally, if $\gamma < F_i(\mathbf{w})$
then $F'_i \sim F'_{i+1}$ is the relation $\infty \sim \infty$, which
is in $\Trop(\inn_{\mathbf{w}}(I))$.
\end{proof}

Note that the second condition in Proposition~\ref{p:chaininsupport} was
crucial in this proof.

We are now ready to prove Theorem~\ref{t:mainthm}.  The proof requires
understanding the effect of changes of coordinates on tropical
varieties and congruences.  The group $\GL(n,\mathbb Z)$ acts on $S$
by monomial change of coordinates.  Explicitly, a matrix $A$ sends 
a tropical polynomial $f(\mathbf{x}) = \bigtplus
a_{\mathbf{u}} \ttimes  \mathbf{x}^{\mathbf{u}}$ to $\bigtplus a_{\mathbf{u}} \ttimes
\mathbf{x}^{A\mathbf{u}} = f(A^T\mathbf{x})$.  We write $A
\cdot f$ for this transformed polynomial.  If $J$ is a congruence on
$S$ then $A \cdot J$ is the congruence generated by $\{ A \cdot f \sim
A \cdot g \, : \, f \sim g \in J \}$.  This action is the tropicalization of
the action of $\GL(n,\mathbb Z)$ on $K[x_1^{\pm 1},\dots,x_n^{\pm 1}]$
that sends 
a
monomial $\mathbf{x}^{\mathbf{u}}$ to $\mathbf{x}^{A\mathbf{u}}$.  
Moreover, the action commutes
with tropicalization: We have $\trop(A\cdot f)=A \cdot \trop(f)$.  In particular, 
this implies that if $I$ is an ideal in $K[x_1^{\pm 1},\dots,x_n^{\pm 1}]$ then
$\trop(V(A \cdot I)) = A \cdot \trop(V(I))$; see \cite{TropicalBook}*{Corollary 3.2.13}.

\begin{proof}[Proof of Theorem~\ref{t:mainthm}]
Let $\mathbf{w}$ lie in the relative interior of a maximal cell
$\sigma$ of the tropical variety $\trop(V(I))$, and let $L =
\spann(\mathbf{w}-\mathbf{w}' : \mathbf{w}' \in \sigma)$.  By
\cite{TropicalBook}*{Lemma 3.3.6} we have
$L=\trop(V(\inn_{\mathbf{w}}(I)))$, so Equation~\eqref{eq:dual2}
implies that $L$ can be recovered from the congruence
$\Trop(\inn_{\mathbf{w}}(I)) = \inn_{\mathbf{w}}(\Trop(I))$. This
means that $L$ is determined by $\inn_{\mathbf{w}}(\Trop(I))$, and
thus by $\Trop(I)$.

 After a monomial change of coordinates we may assume that $L =
 \spann(\mathbf{e}_1,\dots,\mathbf{e}_d)$.  By
 \cite{TropicalBook}*{Corollary 2.4.10} the initial ideal
 $\inn_{\mathbf{w}}(I)$ is homogeneous with respect to the $\mathbb
 Z^d$-grading by $\deg(x_i) = \mathbf{e}_i$ for $1 \leq i \leq d$ and
 $\deg(x_i)=\mathbf{0}$ otherwise, so it has a generating set
 $f_1,\dots,f_r$ where $f_i \in \K[x_{d+1}^{\pm 1},\dots,x_n^{\pm
     1}]$.  By \cite{TropicalBook}*{Lemma 3.4.7}, the multiplicity of
 $\sigma$ equals the dimension $\dim_{\K}(R'/(\inn_{\mathbf{w}}(I)
 \cap R'))$, where $R'=\K[x_{d+1}^{\pm 1},\dots,x_n^{\pm 1}]$.  Let
 $\inn_{\mathbf{w}}(I)^h \subset \K[x_0,\dots,x_n]$ be the
 homogenization of the ideal $\inn_{\mathbf{w}}(I) \cap \K[x_1,\dots,
   x_n]$.  Note that since $R'/(\inn_{\mathbf{w}}(I) \cap R')$ is
 zero-dimensional, the Hilbert polynomial of
 $\K[x_0,x_{d+1},\dots,x_n]/(\inn_{\mathbf{w}}(I)^h)$ is equal to the
 constant polynomial $\dim_{\K}(R'/(\inn_{\mathbf{w}}(I) \cap R'))$,
 and thus equals $\mult(\mathbf{w})$.

By \cite{Giansiracusa2}*{Theorem 7.1.6} the Hilbert polynomial of a
homogeneous ideal $J$ can be recovered from its tropicalization
$\Trop(J) \subset \tilde{S}$, so to show that $\mult(\mathbf{w})$ can
be recovered from $\Trop(I)$ it is enough to show that
$\Trop(\inn_{\mathbf{w}}(I)^h)$ can be recovered from $\Trop(I)$.  By
Proposition~\ref{p:homogenizecongruence} we have
$\Trop(\inn_{\mathbf{w}}(I)^h) = \Trop(\inn_{\mathbf{w}}(I))^h$, and
by Proposition~\ref{p:inicommuteswithtrop} we have
$\Trop(\inn_{\mathbf{w}}(I))^h = \inn_{\mathbf{w}}(\Trop(I))^h$,
so the result follows.
\end{proof}

We can thus recover the tropical cycle from the tropical scheme.  This
can be considered as a tropicalization of the Hilbert-Chow morphism
that takes a scheme to the underlying cycle.

\section{Tropical schemes and valuated matroids}
\label{s:matroids}

In this section we investigate in more depth the structure of the
equivalence classes of $\Trop(I)$.  We restrict our attention 
to the case where $I$ is a homogeneous ideal in the polynomial
ring $K[x_0,\dots,x_n]$; an understanding in this case extends to
ideals in $K[x_1^{\pm 1},\dots,x_n^{\pm 1}]$
using Proposition~\ref{p:homogenizecongruence}.  We prove that any
homogeneous tropical polynomial $F \in \tilde{S}$  has a
distinguished representative in its equivalence class, and we give a
computationally tractable description of it. 
The combinatorial machinery that naturally keeps track of the
information contained in the congruence $\Trop(I)$ is that of
\emph{valuated matroids}.

Valuated matroids are a generalization of the notion of matroids that
were introduced by Dress and Wenzel in \cite{DressWenzel}.  Our sign
convention is, however, the opposite of theirs.  For basics of
standard matroids, see, for example, \cite{Oxley}.

Let $E$ be a finite set, and let $r \in \mathbb N$. 
Denote by $\binom{E}{r}$ the collection of subsets of $E$ of size $r$. 
A {\em valuated
  matroid} $\M$ on the ground set $E$ is a function $p \colon
{\binom{E}{r}} \rightarrow \Rbar$ satisfying the following
properties.
\begin{enumerate}
\item There exists $B \in \binom{E}{r}$ such that $p(B) \neq \infty$. 
\item {\it Tropical Pl\"ucker relations:} For every $B, B' \in \binom{E}{r}$ and every 
$\mathbf{u} \in B - B'$ there exists $\mathbf{v} \in B' - B$ with 
\[p(B) + p(B') \geq p(B - \mathbf{u} \cup 
\mathbf{v}) + p(B' - \mathbf{v} \cup \mathbf{u}).\]
\end{enumerate}
The support 
\[{\textstyle \supp(p) := \{ B \in \binom{E}{r} : p(B) \neq
  \infty \} }\] is the collection of bases of a rank $r$ matroid on the ground set $E$, 
called the {\em underlying matroid} $\underline{\M}$ of $\M$.
The function $p$ is called the {\em basis valuation function} of $\M$.
We consider the basis valuation functions $p$ and $\lambda + p$ for $\lambda \in \mathbb{R}$ to be the same
valuated matroid.

We denote by $\mon_d$ the set of monomials of degree $d$ in the
variables $x_0, \dotsc , x_n$.
Any homogeneous polynomial $f \in
K[x_0,\dots,x_n]$ of degree $d$ can be regarded as a linear form $l_f$ on 
the 
$K$-vector-space $V_d$ with basis $\mon_d$.  Let $I_d$ be the degree
$d$ part of the ideal $I$.  Consider the linear subspace
\[
 L_d := \{ \mathbf{y} \in V_d : l_f(\mathbf{y}) = 0 \text{ for all } f \in I_d \} \subset V_d.
\]
Under the pairing $\langle \cdot , \cdot \rangle : K[x_0,\dots,x_n]_d
\times V_d \to K$ defined by $\langle f,\mathbf{y} \rangle := l_f(\mathbf{y})$, the
linear subspace $L_d$ is orthogonal to $I_d$.  Let $r_d =
\dim(L_d)$.  The linear subspace $L_d$ determines a point in the
Grassmannian $\Gr(r_d, V_d)$.  The coordinates of this point in the
Pl\"ucker embedding of $\Gr(r_d,V_d)$ into $\mathbb P^N$, where $N :=
{\binom{|\mon_d|}{r_d}}-1$, are called the Pl\"ucker coordinates of
$L_d$ (and dually of $I_d$).  They are indexed by subsets of $\mon_d$ of size
$r_d$.

\begin{definition} \label{d:valuatedmatroid}
The valuated
matroid $\M(I_d)$ of $I_d$ is the function $p_d : {\binom{\mon_d}{r_d}} 
\rightarrow \Rbar$ given by setting $p_d(B)$ 
to be the valuation of the Pl\"ucker
coordinate of $L_d \in \Gr(r_d, {\binom{n +d}{d}})$ indexed by
$B$.  We denote by $\underline{\M}(I_d)$ the underlying
matroid of $\M(I_d)$.
\end{definition}

A valuated matroid that comes from taking the valuation of Pl\"ucker
coordinates is called {\em realizable}.  The function $p_d$ is the
{\em tropical Pl\"ucker vector} associated to the tropical linear
space $\trop(L_d)$; it completely determines $\trop(L_d)$
\cite{SpeyerSturmfels}*{Theorem 3.8}. 
The valuated matroids that arise in the tropicalization of an ideal
are all realizable, but we will not need that fact in the proofs in
this section.

Usual matroids have several different ``cryptomorphic'' definitions,
and the same is true for valuated matroids.  In the underlying matroid
$\underline{\M}(I_d)$, a subset of monomials $A \subset \mon_d$ is
dependent if and only if there exists a polynomial $h \in I_d$ with
$\supp(h) \subset A$.  Thus $C \subset \mon_d$ is a {\em circuit} of
$\underline{\M}(I_d)$ if and only if $C= \supp(h)$ for some $h \in
I_d$ of minimal support.  A tropical polynomial $H \in \trop(I)_d$ is
called a {\em vector} of the valuated matroid $\M(I_d)$.  Such an $H$
has the form $\bigtplus_{i=1}^s a_i \ttimes \trop(f_i)$ for some
$f_i \in I_d$ and $a_i \in \mathbb R$.  Vectors of minimal
support are called \emph{valuated circuits} of $\M(I_d)$.  These all
have the form $H = a \ttimes  \trop(h)$ for some $h \in I_d$ of minimal
support and $a \in \mathbb{R}$. If $H$ and $G$ are valuated
circuits of $\M(I_d)$ with the same support then there exists some $a
\in \mathbb{R}$ such that $H = a \ttimes G$.  The set of vectors and the set
of valuated circuits of $\M(I_d)$ each separately determines $\M(I_d)$;
see \cite{MurotaTamura}*{Theorem 3.3}.

With these definitions in place, we can now finish the proof of
Theorem~\ref{t:equivalent}.  We restate it in a slightly generalized
form, allowing more general projective schemes.

\begin{theorem} \label{t:generalizedEquivalence}
Let $Z \subset \mathbb P^n$ be a subscheme defined by a homogeneous
ideal $I \subset K[x_0,\dots,x_n]$.  Then any of the following three objects determines the others:

\begin{enumerate}
\item \label{item:congruence2} The congruence $\Trop(I)$ on  $\tilde{S}$;
\item \label{item:ideal2} The ideal $\trop(I)$ in $\tilde{S}$;
\item \label{item:valuated2} The set of valuated matroids $\{\M(I_d)\}_{d \geq 0}$,
where   $I_d$ is the degree $d$ part of $I$.
\end{enumerate}

Thus if $Y \subset T \cong (K^*)^n$ is a subscheme given by an ideal
$I \subset K[x_1^{\pm 1},\dots,x_n^{\pm 1}]$, and $I^h \subset
K[x_0,\dots,x_n]$ is the ideal of the projective closure $\overline{Y}
\subset \mathbb P^n$ of $Y$, then the ideal $\trop(I) \subset S$ and
the set of valuated matroids $\M(I_d^h)$ for $d \geq 0$ determine each
other.
\end{theorem}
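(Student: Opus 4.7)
The plan is to derive Theorem~\ref{t:generalizedEquivalence} by adapting the Laurent-case argument of Section~\ref{s:preliminaries} to the projective setting and adding the new equivalence with the valuated matroid data. The equivalence (\ref{item:congruence2}) $\Leftrightarrow$ (\ref{item:ideal2}) for a homogeneous ideal $I \subset K[x_0,\dots,x_n]$ follows by essentially the same argument as at the end of Section~\ref{s:preliminaries}, with the homogenization step simply omitted since $I$ is already homogeneous: the forward direction extracts $\Trop(I)$ from the bend relations $F \sim F_{\hat{\mathbf{u}}}$ for $F \in \trop(I)$ and $\mathbf{u} \in \supp(F)$, while the reverse direction recovers, in each degree $d$, the tropical linear space $\ell_d \subset \Rbar^{\binom{n+d}{d}}$ cut out by $\Trop(I)$, then dualizes to $\ell_d^\perp = \trop(I)_d$ and assembles $\trop(I) = \bigcup_{d \geq 0} \trop(I)_d$.

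For the equivalence with (\ref{item:valuated2}), the identification is essentially built into the definitions of this section. By Definition~\ref{d:valuatedmatroid} and the discussion immediately following it, the set of vectors of $\M(I_d)$ is by definition the set of tropical polynomials of the form $\min_i(a_i+\trop(f_i))$ with $f_i \in I_d$ and $a_i \in \mathbb{R}$, which coincides with $\trop(I)_d$. Since $I$ is homogeneous, the ideal $\trop(I)$ is itself graded with degree-$d$ part $\trop(I)_d$: every generator $\trop(f)$ splits as $\min_d \trop(f_d)$, and tropical multiplication by a monomial of degree $e$ raises degrees uniformly by $e$, so only the degree-$d$ generators $\trop(g)$ with $g \in I_d$ contribute to $\trop(I)_d$. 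Because the set of vectors determines a valuated matroid and vice versa \cite[Theorem 3.3]{MurotaTamura}, summing over $d$ yields (\ref{item:ideal2}) $\Leftrightarrow$ (\ref{item:valuated2}).

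The concluding ``thus'' statement for a Laurent ideal $I \subset K[x_1^{\pm 1},\dots,x_n^{\pm 1}]$ follows by chaining through homogenization. Given $\trop(I) \subset S$, the equivalence (\ref{item:ideal}) $\Leftrightarrow$ (\ref{item:congruence}) of Theorem~\ref{t:equivalent} recovers $\Trop(I)$; Proposition~\ref{p:homogenizecongruence} then passes to $\Trop(I^h) = \Trop(I)^h$; and the projective three-way equivalence established above extracts $\{\M(I^h_d)\}_{d \geq 0}$. Each step is reversible, so the converse direction is identical. The main technical point throughout is the identification $\trop(I)_d = \{\text{vectors of } \M(I_d)\}$, but this is essentially immediate from the conventions set up in this section; the remaining work is a bookkeeping exercise combining tropical linear-space duality, the cryptomorphic equivalence between vectors and basis valuations of a valuated matroid, and the earlier results of the paper.
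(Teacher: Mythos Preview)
Your proposal is correct and follows essentially the same route as the paper: the equivalence (\ref{item:congruence2}) $\Leftrightarrow$ (\ref{item:ideal2}) is exactly the Section~\ref{s:preliminaries} argument specialized to a homogeneous $I$, and (\ref{item:ideal2}) $\Leftrightarrow$ (\ref{item:valuated2}) is the identification $\trop(I)_d = \{\text{vectors of }\M(I_d)\}$ together with \cite[Theorem 3.3]{MurotaTamura}. For the final ``thus'' clause the paper is slightly more direct, passing straight between $\trop(I)$ and $\trop(I^h)$ via homogenization and the substitution $x_0=0$, whereas you detour through the congruences using Proposition~\ref{p:homogenizecongruence}; both are fine and the difference is only bookkeeping.
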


\begin{proof}
The proof that  (\ref{item:congruence2}) determines 
(\ref{item:ideal2}) given at the end of Section~\ref{s:preliminaries}
included the proof for general homogeneous ideals, as we never used that
$I^h$ was a homogenization.  The proof given there that
(\ref{item:ideal2}) determines (\ref{item:congruence2}) is also valid
for homogeneous ideals.

The elements of $\trop(I)_d$ are the vectors of the valuated matroid
$\M(I_d)$, so $\trop(I)$ determines and is determined by the set of
valuated matroids $\{\M(I_d)\}_{d \geq 0}$.  This shows
(\ref{item:ideal2}) $\Leftrightarrow$ (\ref{item:valuated2}).

When $I \subset K[x_1^{\pm 1},\dots,x_n^{\pm 1}]$, the ideal
$\trop(I^h)$ in $\tilde{S}$ is the homogenization of the ideal
$\trop(I)$ in $S$, and also $\trop(I) = \trop(I^h)|_{x_0=0}$. This
shows that $\trop(I)$ determines $\trop(I^h)$ and conversely, so the
last part follows from the first.
\end{proof}

We now investigate in more depth the structure of the equivalence
classes of $\Trop(I)$. 
In what follows, for any homogeneous tropical
polynomial $F \in \tilde{S}_d$ and any $\mathbf{u} \in \mon_d$, we denote
by $F^\mathbf{u}$ the coefficient of $F$ corresponding to the monomial
$\mathbf{u}$.  For $F, G \in \tilde{S}_d$, 
we say that $F \leq G$ if the inequality holds
coefficient-wise, so $F^\mathbf{u} \leq {G}^\mathbf{u}$ for
all $\mathbf{u} \in M_d$.

We will restrict our attention to the case where the subspace $I_d$
does not contain any monomials, so the matroid $\underline{\M}(I_d)$ is a
loopless matroid.  When $I$ does contain a monomial $g = a
\mathbf{x}^{\mathbf{u}}$, the congruence $\Trop(I)$ contains the
relation $\trop(g) \sim \trop(g)_{\hat{\mathbf{u}}} = \infty$.  For
any tropical polynomial $P \in \tilde{S}_d$ and any $\lambda \in
\mathbb{R}$, the relation $ P \tplus \lambda \ttimes
\mathbf{x}^{\mathbf{u}} \sim P$ is then in $\Trop(I)$.  This implies
that the equivalence class of a tropical polynomial $F \in
\tilde{S}_d$ does not depend on the coefficient of the monomial
$\mathbf{u}$, so we do not lose information by ignoring this
coefficient.

Let $F = \bigtplus_{\mathbf{u} \in \mon_d} F^\mathbf{u} \ttimes  \mathbf{x}^{\mathbf{u}}$
be a homogeneous tropical polynomial of degree $d$.
For any circuit 
$C \subset \mon_d$
of $\underline{\M}(I_d)$ and any
$\mathbf{u} \in C$, let $G_{C,\mathbf{u}}$ be the valuated circuit
of $\M(I_d)$ satisfying
$\supp(G_{C,\mathbf{u}}) = C$ and $G_{C,\mathbf{u}}^\mathbf{u} = 0$.
Furthermore, let 
\begin{equation}\label{eq:lambdas}
\lambda_{C,\mathbf{u}} := \max_{\mathbf{v} \in C-\mathbf{u}} \, 
(F^\mathbf{v} - G_{C,\mathbf{u}}^\mathbf{v}) \in \Rbar.
\end{equation}
The subtraction here is in usual arithmetic, where we follow the
convention that $\infty - a = \infty$ for $a \in \mathbb R$.  Since
$\mathbf{v} \in C$ we have $G^{\mathbf{v}}_{C,\mathbf{u}} < \infty$.
The assumption that $\underline{\M}(I_d)$ is loopless ensures that
this maximum is over a nonempty set, so $\lambda_{C,\mathbf{u}} \in
\Rbar$.  Equivalently, $\lambda_{C,\mathbf{u}}$ satisfies
\begin{equation}\label{eq:lambdas2}
\lambda_{C,\mathbf{u}} = \min \left( \lambda \in \Rbar : 
\lambda + (G_{C,\mathbf{u}})_{\hat{\mathbf{u}}} \geq F \right).
\end{equation}
We define the tropical polynomial $\pi(F) \in \tilde{S}_d$ to be the tropical sum 
\[
\pi(F)  :=  F  \tplus \Bigl( \bigtplus_{{%\substack{C \subset E_F\\
    \mathbf{u} \in C \subset \mon_d}} \lambda_{C,\mathbf{u}}\ttimes G_{C,\mathbf{u}} \Bigr)
 \]
where the inner sum is taken over all circuits $C$ of 
$\underline{\M}(I_d)$ 
and all $\mathbf{u} \in C$.
The coefficient of $\mathbf{v}$ in $\pi(F)$ is 
\begin{equation}\label{eqtn:piFv}
\pi(F)^{\mathbf{v}} = \min \Bigl( \, F^{\mathbf{v}} \,, \, 
\min_{{%\substack{C \subset E_F\\
    \mathbf{v} \in C \subset \mon_d}} (\lambda_{C,\mathbf{v}} ) \,\Bigr),
\end{equation}
where the inner minimum is only over those circuits $C$ containing
$\mathbf{v}$.

\begin{example}
Consider the ideal $I = \langle x + y + tz, x+y+t^2w \rangle$ in
$\PuiseuxC[x,y,z,w]$.  The underlying matroid $\underline{\M}(I_1)$ in
degree one has ground set $\mon_1 = \{x,y,z,w\}$, and circuits
$\{x,y,z\}$, $\{x,y,w\}$, and $\{z,w\}$. The valuated matroid
$\M(I_1)$ has valuated circuits $x \tplus y \tplus 1\ttimes z$, $x
\tplus y \tplus 2 \ttimes w$, and $z \tplus  1 \ttimes w$.  Consider the
tropical polynomial $F =  x \tplus 1 \ttimes y \in \Rbar[x,y,z,w]$. The
polynomial $\pi(F)$ is equal to
\begin{align*}
 \pi(F) &=   F \tplus  1 \ttimes (x \tplus y \tplus 1 \ttimes z)  \tplus   1  \ttimes (x \tplus y \tplus 2 \ttimes w) \tplus  \infty \ttimes  (z \tplus  1 \ttimes w) \\
 &= x\tplus  1 \ttimes y \tplus  2 \ttimes z \tplus 3 \ttimes w.
\end{align*}
Similarly, for the tropical polynomial $F' = 2 \ttimes w \in S$ we have
\begin{align*}
 \pi(F') &=  F'  \tplus  \infty  \ttimes  (x \tplus y \tplus 1 \ttimes z) \tplus  \infty  \ttimes  (x \tplus y \tplus 2 \ttimes w) \tplus 1  \ttimes  (z\tplus  1 \ttimes w) \\
 &= 1 \ttimes z\tplus  2\ttimes w. \tag*{\qedhere} 
\end{align*}
\end{example}

The following proposition shows that $\pi(F)$ is the coefficient-wise
smallest tropical polynomial in the equivalence class of $F$ in
$\Trop(I)$.  It is thus a distinguished representative of the
equivalence class.
\begin{proposition}\label{prop:properties}
The map $\pi : \tilde{S}_d \to \tilde{S}_d$ satisfies the following properties:
\begin{enumerate}[label={\normalfont (\alph*)}]
\item \label{it:less} $\pi(F) \leq F$.
\item \label{it:proj} $\pi(\pi(F)) = \pi(F)$.
\item \label{it:sim} $F \sim \pi(F) \, \in \Trop(I)$.
\item \label{it:unique} $F \sim F' \, \in \Trop(I) \, \Longleftrightarrow \, \pi(F) = \pi(F')$.
\end{enumerate}
\end{proposition}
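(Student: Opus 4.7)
Property (a) is immediate from the definition: $\pi(F)$ is a tropical minimum whose first argument is $F$, so every coefficient of $\pi(F)$ is bounded above by the corresponding coefficient of $F$.

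For property (c), the plan is to iterate bend relations one valuated circuit at a time. For any circuit $C$ of $\underline{\M}(I_d)$ and any $\mathbf{u} \in C$, the valuated circuit $G_{C,\mathbf{u}}$ equals $a + \trop(g)$ for some $g \in I_d$ of minimal support $C$ and some $a \in \mathbb R$, so the generating bend relation $\trop(g) \sim \trop(g)_{\hat{\mathbf{u}}}$ in $\Trop(I)$ yields
\[
\lambda_{C,\mathbf{u}} + G_{C,\mathbf{u}} \,\sim\, \lambda_{C,\mathbf{u}} + (G_{C,\mathbf{u}})_{\hat{\mathbf{u}}}
\]
after shifting by $\lambda_{C,\mathbf{u}} + a$. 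The characterization \eqref{eq:lambdas2} of $\lambda_{C,\mathbf{u}}$ gives $\lambda_{C,\mathbf{u}} + (G_{C,\mathbf{u}})_{\hat{\mathbf{u}}} \geq F$, so $F = \min(F, \lambda_{C,\mathbf{u}} + (G_{C,\mathbf{u}})_{\hat{\mathbf{u}}}) \sim \min(F, \lambda_{C,\mathbf{u}} + G_{C,\mathbf{u}})$. Iterating over the finitely many pairs $(C,\mathbf{u})$ produces a chain of relations $F \sim \pi(F)$.

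The plan for (b) and (d) is to establish the intrinsic characterization that \emph{$\pi(F)$ is the coefficient-wise minimum element of the $\Trop(I)$-equivalence class $[F]$}. Once proved, (d)$\Rightarrow$ is immediate (the minimum is a class invariant), (d)$\Leftarrow$ follows from (c), and (b) follows from (d) applied to $F \sim \pi(F)$. Parts (a) and (c) already establish $\pi(F) \in [F]$ and $\pi(F) \leq F$, so it remains to show $\pi(F)^{\mathbf{u}} \leq G^{\mathbf{u}}$ for every $G \sim F$ and every $\mathbf{u} \in \mon_d$. Since $[F]$ is closed under coefficient-wise min (as $\sim$ is a congruence, so $G \sim F$ implies $\min(F,G) \sim \min(F,F) = F$), one may replace $G$ by $\min(F,G) \in [F]$ and assume $G \leq F$; the case $G^{\mathbf{u}} \geq F^{\mathbf{u}}$ is then covered by (a). Fix $\mathbf{u}$ with $G^{\mathbf{u}} < F^{\mathbf{u}}$, apply Proposition \ref{p:chaininsupport} to produce a chain $F = F_0 \sim \dotsb \sim F_s = G$ of elementary bend relations whose intermediate coefficients lie in $\{F^{\mathbf{w}}, G^{\mathbf{w}}\}$, and isolate the step $i$ at which the $\mathbf{u}$-coefficient drops from $F^{\mathbf{u}}$ to $G^{\mathbf{u}}$. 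This step supplies a vector $V = a + \trop(g) \in \trop(I)_d$ with $V^{\mathbf{u}} = G^{\mathbf{u}}$ and $V^{\mathbf{w}} \geq F_i^{\mathbf{w}}$ for all $\mathbf{w} \neq \mathbf{u}$. Using the conformal decomposition of vectors of a valuated matroid into valuated circuits, I would extract a valuated circuit $V_0$ with support $C \ni \mathbf{u}$, satisfying $V_0^{\mathbf{u}} = V^{\mathbf{u}}$ and $V_0 \geq V$ on $C$. The circuit $C$ would then witness $\lambda_{C,\mathbf{u}}(F) \leq G^{\mathbf{u}}$ and hence $\pi(F)^{\mathbf{u}} \leq G^{\mathbf{u}}$.

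The main obstacle I expect is bridging the gap between $V_0^{\mathbf{w}} \geq F_i^{\mathbf{w}}$ (which the chain together with conformal decomposition directly provides) and the stronger $V_0^{\mathbf{w}} \geq F^{\mathbf{w}}$ actually needed to bound $\lambda_{C,\mathbf{u}}(F)$ via the formula $\lambda_{C,\mathbf{u}}(F) = \max_{\mathbf{w} \in C - \mathbf{u}}(F^{\mathbf{w}} - V_0^{\mathbf{w}}) + G^{\mathbf{u}}$. The two inequalities diverge precisely at coordinates $\mathbf{w}$ whose intermediate coefficient has already dropped from $F^{\mathbf{w}}$ to the smaller value $G^{\mathbf{w}}$ by step $i$. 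I anticipate resolving this either by reorganizing the chain using the local arrow-replacement moves from the proof of Proposition \ref{p:chaininsupport} so that the $\mathbf{u}$-coefficient drop occurs before any other coefficient drop (making every $F_i^{\mathbf{w}}$ equal to $F^{\mathbf{w}}$ at the crucial step), or by inducting on the size of the set $\{\mathbf{w} : G^{\mathbf{w}} < F^{\mathbf{w}}\}$ and invoking the valuated Pl\"ucker exchange axiom at each inductive step to build circuits that correctly dominate $F$ once the induction hypothesis has disposed of the already-dropped coordinates.
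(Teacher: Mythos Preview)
Your treatment of (a) and (c) is correct and essentially identical to the paper's.

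For (b) and (d) your strategy---characterize $\pi(F)$ as the coefficient-wise minimum of its $\Trop(I)$-class and derive everything from that---is sound, and the obstacle you isolate is genuine.  Your first proposed resolution in fact works: after the reorganization in the proof of Proposition~\ref{p:chaininsupport}, each label appears once and all left-pointing arrows precede all right-pointing ones; since you reduced to $G\le F$, the $\mathbf{u}$-labelled arrow must be a left arrow (a right arrow cannot decrease a coefficient), and operation~(3) lets you commute adjacent left arrows freely, so the $\mathbf{u}$-drop can be moved to the very first step.  At that step the vector $V=m+\trop(g)$ satisfies $V^{\mathbf u}=G^{\mathbf u}$ and $V^{\mathbf w}\ge F^{\mathbf w}$ for $\mathbf w\ne\mathbf u$, and Fact~(1) then yields the required circuit bound $\lambda_{C,\mathbf u}\le G^{\mathbf u}$.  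So your plan can be completed, but it leans on the arrow-replacement machinery of Proposition~\ref{p:chaininsupport} and on checking the direction of the $\mathbf{u}$-arrow under the hypothesis $G\le F$.

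The paper bypasses all of this.  Rather than proving minimality, it shows directly that $\pi$ is \emph{constant} along every generating relation: by Lemma~\ref{lem:semimodule}, $\Trop(I)$ is the transitive closure of relations $\min(H,P)\sim\min(H_{\hat{\mathbf u}},P)$ with $H$ a vector of $\M(I_d)$, so it suffices to verify $\pi(\min(H,P))=\pi(\min(H_{\hat{\mathbf u}},P))$.  This is done coordinate-wise using valuated circuit elimination (the paper's Fact~(2)) applied to $H$ and the relevant circuit $G_{C,\mathbf v}$.  The trade-off: the paper's argument is shorter and never touches Proposition~\ref{p:chaininsupport}, while yours gives the additional structural statement that $\pi(F)=\min\{F':F'\sim F\}$, which is worth knowing but is not stated explicitly in the paper.
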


In the proof of Proposition \ref{prop:properties} we will make use of the following facts about valuated circuits:
\begin{enumerate}
\item \label{enum:VCfact1}
 If $H$ is a vector of $\M(I_d)$ with $\mathbf{u}
  \in \supp(H)$ then there is a valuated circuit $G$
  with $G^{\mathbf{u}} =
  H^{\mathbf{u}}$ and $G \geq H$.

\item \label{enum:VCfact2} If $H$ is a vector and $G$ is a valuated circuit of
  $\M(I_d)$ with $H^{\mathbf{u}} = G^{\mathbf{u}} < \infty$ and $H^{\mathbf{v}} >
  G^{\mathbf{v}}$, then there is a valuated circuit $G'$ of $\M(I_d)$
  with $G' \geq \min(H,G)$, $G'^{\mathbf{v}} = G^{\mathbf{v}}$, and $G'^{\mathbf{u}} = \infty$.
\end{enumerate}

Fact~\eqref{enum:VCfact1} follows from Theorems 3.4 and 3.5 of
\cite{MurotaTamura} and the definition given there of the function
$\phi_{\mathcal X \rightarrow \mathcal V}(\mathcal X)$.
Fact~\eqref{enum:VCfact2} is a combination of
Fact~\eqref{enum:VCfact1} and the valuated circuit elimination axiom
\cite{MurotaTamura}*{Theorem 3.1 (VCE)}.

\begin{proof}[Proof of Proposition \ref{prop:properties}]
Property \ref{it:less} follows directly from the definition, since $F$
is a tropical summand of $\pi(F)$.  Property \ref{it:proj} follows
from properties \ref{it:sim} and \ref{it:unique}, which we now prove.  In
order to show that Property \ref{it:sim} holds, fix an enumeration
$\{(\mathbf{u}_1,C_1), \dotsc, (\mathbf{u}_s, C_s)\}$ of the set
$\{(\mathbf{u},C): C \text{ is a circuit of $\underline{\M}(I_d)$}
\text{ and } \mathbf{u} \in C\}$.
For $0 \leq i \leq s$, set
\[
H_i :=  F \tplus \Bigl( \bigtplus_{1 \leq j \leq i}
\lambda_{C_j,\mathbf{u}_j} \ttimes G_{C_j,\mathbf{u}_j} \Bigr)
\]
so that $H_0 = F$ and $H_s = \pi(F)$. 
By Equation \eqref{eq:lambdas2}, for any $i$ we have
$H_{i-1} \leq F \leq \lambda_{C_i,\mathbf{u}_i} \ttimes  (G_{C_i,\mathbf{u}_i})_{\hat{\mathbf{u}}_i}$.
Since $\Trop(I)$ is a congruence, the relation 
\[
H_{i-1} = H_{i-1} \tplus  \lambda_{C_i,\mathbf{u}_i} \ttimes (G_{C_i,\mathbf{u}_i})_{\hat{\mathbf{u}}_i}  \sim 
H_{i-1}\tplus  \lambda_{C_i,\mathbf{u}_i} \ttimes G_{C_i,\mathbf{u}_i} = H_i
\]
is in $\Trop(I)$.  The result follows from transitivity.

We now prove Property \ref{it:unique}. If $\pi(F) = \pi(F')$ then by Property \ref{it:sim} we have 
$F \sim \pi(F) = \pi(F') \sim F'$, so $F \sim F'$. 
In order to prove the converse statement, by Lemma
\ref{lem:semimodule} it is enough to show that $\pi(H \tplus P) =
\pi(H_{\hat{\mathbf{u}}} \tplus P)$ for any vector $H$ 
of $\M(I_d)$,
$\mathbf{u} \in \supp(H)$, and $P \in \tilde{S}_d$.  Set
\[
F := H \tplus P \quad \text{ and } \quad F' := H_{\hat{\mathbf{u}}} \tplus P.
\] 
Note that $F$ and $F'$ can only differ in the coefficient corresponding to
the monomial $\mathbf{u}$. We will assume that $F^\mathbf{u}
= H^\mathbf{u} < P^{\mathbf{u}}$, as otherwise $F = F'$. 

For any circuit $C$ 
of $\underline{\M}(I_d)$
and any $\mathbf{u}' \in C$, let
$\lambda_{C,\mathbf{u}'} \in \Rbar$ be as in Equation
\eqref{eq:lambdas}.  Let $\lambda'_{C,\mathbf{u}'}$ be defined
analogously for the tropical polynomial $F'$.  Since $F \leq F'$, we
have $\lambda_{C,\mathbf{u}'} \leq \lambda'_{C,\mathbf{u}'}$.  It
follows that $\pi(F) \leq \pi(F')$.
Since $F^{\mathbf{v}} = {F'}^{\mathbf{v}}$
for $\mathbf{v} \neq \mathbf{u}$, we see from Equation~\eqref{eq:lambdas} 
that $\lambda_{C,\mathbf{u}} =
\lambda'_{C,\mathbf{u}}$.  Thus Equation~\eqref{eqtn:piFv}
implies that $\pi(F)^{\mathbf{u}} = \pi(F')^{\mathbf{u}}$.

Suppose that $\pi(F)^{\mathbf{v}} < \pi(F')^{\mathbf{v}}$ for some
$\mathbf{v}\neq \mathbf{u}$.  By Equation~\eqref{eqtn:piFv}, there must be a circuit $C$ 
of $\underline{\M}(I_d)$ with $\mathbf{v} \in C$ and both
$\lambda_{C,\mathbf{v}} < \pi(F')^{\mathbf{v}} \leq
\lambda'_{C,\mathbf{v}}$ and $\lambda_{C, \mathbf{v}} < F^{\mathbf{v}}
= {F'}^{\mathbf{v}} \leq H^{\mathbf{v}}$.  The maximum in
Equation~\eqref{eq:lambdas} must then be achieved at the coefficient
of $\mathbf{u}$, as this 
is 
the only coefficient for which $F$ and $F'$
differ,
so $\lambda_{C,\mathbf{v}} = F^{\mathbf{u}} -
G_{C,\mathbf{v}}^{\mathbf{u}}$.
 Note that this implies in particular that $\mathbf{u} \in C$.  By
 Fact~\eqref{enum:VCfact2} applied to the vector $H$ and the valuated
 circuit $\lambda_{C,\mathbf{v}} \ttimes G_{C,\mathbf{v}}$,
there is a valuated circuit $G'$ of support $C'$ with $\mathbf{u}
\not \in C'$, ${G'}^{\mathbf{v}} = \lambda_{C,\mathbf{v}}$, and
$G' \geq \lambda_{C,\mathbf{v}} \ttimes G_{C,\mathbf{v}} \tplus H$. 
The valuated circuit $G'$ must then be equal to 
$\lambda_{C,\mathbf{v}} \ttimes G_{C',\mathbf{v}}$. 
We now have
\[\lambda_{C,\mathbf{v}} \ttimes (G_{C',\mathbf{v}})_{\hat{\mathbf{v}}} = G'_{\hat{\mathbf{v}}}  \geq
\lambda_{C,\mathbf{v}} \ttimes (G_{C,\mathbf{v}})_{\hat{\mathbf{v}}} \tplus
H_{\hat{\mathbf{v}}} \geq F, 
\]
and thus in view of Equation \eqref{eq:lambdas2}, $\lambda_{C',\mathbf{v}} \leq
\lambda_{C,\mathbf{v}}$.  Since $\mathbf{u} \not \in C'$, we
have $\lambda_{C',\mathbf{v}} = \lambda'_{C',\mathbf{v}}$ by
Equation~\eqref{eq:lambdas}, which contradicts
$\lambda_{C,\mathbf{v}} < \pi(F')^{\mathbf{v}} \leq
\lambda'_{C',\mathbf{v}}$.  We thus conclude that
$\pi(F)^{\mathbf{v}} = \pi(F')^{\mathbf{v}}$ for all $\mathbf{v}$.
\end{proof}

\begin{remark}
Note that we have $\lambda_{C,\mathbf{u}} < \infty$ if and only if $C - \mathbf{u} \subset \supp(F)$. 
It follows that $\supp(\pi(F))$ equals the closure $E_F$ of
the set $\supp(F)$ in the matroid $\underline{\M}(I_d)$.
In fact, we may regard $\pi(F)$ as being the ``valuated closure'' of $F$ in the valuated matroid
$\M(I_d)$.  
The definition of $\pi$ makes sense for any valuated matroid $\M$, 
and its properties stated in Proposition \ref{prop:properties} 
remain valid in this more general setup.
It would be interesting to develop a set of cryptomorphic axioms for valuated matroids
from this ``valuated closure'' perspective.
\end{remark}

\begin{remark}
The map $\pi$ can also be thought of as a tropical projection, in the following sense. 
One can extend the function $p_d$ to all subsets of $\mon_d$, 
obtaining in this way a valuation function for all independent subsets of $\underline{\M}(I_d)$ \cite{Murota}.
Concretely, for any $A \subset \mon_d$ define
\begin{equation*}\label{eq:indep}
{\textstyle p_d(A) := \min \{ p_d(B) : A \subset B \in \binom{\mon_d}{r_d}\},}
\end{equation*}
with the convention that $p_d(A) = \infty$ if the corresponding set is empty.
We have $p_d(A) \neq \infty$ if and only if $A$ is an independent set of $\underline{\M}(I_d)$.
Given any subset $E \subset \mon_d$, the restriction of the function $p_d$ to 
the maximal independent subsets of $E$ gives rise to a valuated matroid on the set $E$, called the
restriction $\M(I_d)|E$ of $\M(I_d)$ to $E$.

Now, suppose $F$ is a homogeneous tropical polynomial of degree $d$.
Let $E_F$ be the closure of
$\supp(F)$ in the matroid $\underline{\M}(I_d)$.
If $\supp(F) = E_F$, it follows from Equation \eqref{eqtn:piFv} and 
\cite{Corel}*{Section 4} that $\pi(F)$ is the tropical
projection of $F \in \Rbar^{E_F}$ onto the tropical linear space in $\Rbar^{E_F}$
corresponding to the valuated matroid $\underline{\M}(I_d)|E_F$,
{\em but taking tropical sum to be $\max$ instead of of $\min$}.
If $\supp(F) \subsetneq E_F$ then we have to be more careful: The 
tropical polynomial $\pi(F)$ is the tropical projection of $F$
after substituting the coefficients corresponding to monomials in 
$E_F - \supp(F)$ by large enough real numbers.

Tropical projections onto tropical linear spaces have been studied
in \cites{Ardila, Corel, Rincon2}. Using those results one can obtain 
a description of $\pi(F)$ 
amenable to computational purposes, as we now describe.
For any basis $B$ of $E_F$ (i.e., a maximal independent set contained in $E_F$),
define 
\[
w_{F}(B) := p_d(B) + {\textstyle \sum_{\mathbf{u} \in B} F^\mathbf{u} \in \Rbar}.
\]
Let $B_F$ be a basis of $E_F$ such that 
$w_{F}(B_F)$ is minimal among all bases $B_F$ of $E_F$.
Note that the value of $w_{F}(B_F)$ is finite, so $B_F \subset \supp(F)$.
For any 
$\mathbf{u} \in E_F - B_F$ 
there exists a unique circuit $C(B_F,\mathbf{u})$ of $\underline{\M}(I_d)$ 
contained in $B_F \cup \mathbf{u}$,
called the \emph{fundamental circuit} of $\mathbf{u}$ over $B_F$.
It is equal to
\[
 C(B_F,\mathbf{u}) = \{ \mathbf{v} \in B_F : B_F \cup \mathbf{u} - \mathbf{v} \text{ is independent in } \underline{\M}(I_d) \} \cup \mathbf{u}.  
\]
With this notation in place, \cite{Corel}*{Section 4, Proposition 5} implies that the coefficients of $\pi(F)$ are given by
\begin{equation*}
 \pi(F)^\mathbf{u} = \begin{cases}
  F^\mathbf{u} & \text{if } \mathbf{u} \in B_F,\\
  {\displaystyle \max_{\mathbf{v} \in C(B_F,\mathbf{u})-\mathbf{u}} \, \left(F^\mathbf{v} - p_d(B_F \cup \mathbf{u} - \mathbf{v}) + p_d(B_F) \right) } & \text{if } \mathbf{u} \in E_F - B_F,\\
 \infty & \text{if } \mathbf{u} \notin E_F.
\end{cases}
\end{equation*}
The computation of the coefficients $\pi(F)^\mathbf{u}$ using this
description involves computing a maximum over only one circuit of
$\underline{\M}(I_d)$. This makes it computationally much simpler than
formula \eqref{eqtn:piFv}, assuming that we know the function $p_d$.
\end{remark}

\begin{bibdiv}

\begin{biblist}

\bib{Ardila}{article}{
    AUTHOR = {Ardila, Federico},
     TITLE = {Subdominant matroid ultrametrics},
   JOURNAL = {Ann. Comb.},
    VOLUME = {8},
      YEAR = {2004},
    NUMBER = {4},
     PAGES = {379--389}
}

\bib{Corel}{article}{
   author = {Corel, Eduardo},
   title = {G\'erard\--{L}evelt membranes},
   journal = {J. Algebraic Combin.},
   volume = {37},
   number = {4},
   year={2013},
   pages = {757-776}
}

\bib{DressWenzel}{article}{
author = {Dress, Andreas},
author = {Wenzel, Walter},
title = {Valuated matroids},
journal = {Adv. Math.},
year = {1992},
volume = {93},
pages = {214--250},
number = {2},
}

\bib{Giansiracusa2}{article}{
   author={Giansiracusa, Jeffrey},
   author={Giansiracusa, Noah},
   title={Equations of tropical varieties},
   journal={Duke Math. J.},
   volume={165},
   date={2016},
   number={18},
   pages={3379--3433},
   issn={0012-7094},
}

\bib{KahleMiller}{article}{
 AUTHOR = {Kahle, Thomas},
 AUTHOR = {Miller, Ezra},
     TITLE = {Decompositions of commutative monoid congruences and binomial ideals},
   JOURNAL = {Algebra Number Theory},
    VOLUME = {8},
      YEAR = {2014},
    NUMBER = {6},
     PAGES = {1297--1364},
}

\bib{KatzPayne}{article}{
   author={Katz, Eric},
   author={Payne, Sam},
   title={Realization spaces for tropical fans},
   conference={
      title={Combinatorial aspects of commutative algebra and algebraic
      geometry},
   },
   book={
      series={Abel Symp.},
      volume={6},
      publisher={Springer},
      place={Berlin},
   },
   date={2011},
   pages={73--88},
}

\bib{TropicalBook}{book}{
    AUTHOR = {Maclagan, Diane},
    AUTHOR = {Sturmfels, Bernd},
     TITLE = {Introduction to tropical geometry},
    SERIES = {Graduate Studies in Mathematics},
    VOLUME = {161},
 PUBLISHER = {American Mathematical Society, Providence, RI},
      YEAR = {2015},
     PAGES = {xii+363},
}

\bib{Murota}{article}{
    AUTHOR = {Murota, Kazuo},
     TITLE = {Matroid valuation on independent sets},
   JOURNAL = {J. Combin. Theory Ser. B},
    VOLUME = {69},
      YEAR = {1997},
    NUMBER = {1},
     PAGES = {59--78},
}

\bib{MurotaTamura}{article}{
author = {Murota, Kazuo},
author = {Tamura, Akihisa},
title = {On circuit valuation of matroids},
journal = {Adv. in Appl. Math.},
year = {2001},
volume = {26},
pages = {192--225},
number = {3}
}

\bib{Oxley}{book}{
   author={Oxley, James G.},
   title={Matroid theory},
   series={Oxford Science Publications},
   publisher={The Clarendon Press Oxford University Press},
   place={New York},
   date={1992},
   pages={xii+532},
   isbn={0-19-853563-5},
}

\bib{RinconIsotropic}{article}{
   author={Rinc\'on, Felipe},
   title={Isotropical linear spaces and valuated Delta-matroids},
   journal={J. Combin. Theory Ser. A},
   volume={119},
   date={2012},
   number={1},
   pages={14--32},
   issn={0097-3165},
}

\bib{Rincon2}{article}{
author = {Rinc\'on, Felipe},
title = {Local tropical linear spaces},
volume = {50},
journal = {Discrete Comput. Geom.},
number = {3},
pages = {700--713},
year = {2013}
}

\bib{SpeyerSturmfels}{article}{
author = {Speyer, David},
author = {Sturmfels, Bernd},
title = {The tropical {G}rassmannian},
journal = {Adv. Geom.},
year = {2004},
volume = {4},
pages = {389--411},
number = {3},
}

\end{biblist}

\end{bibdiv}

\end{document}